\documentclass[11pt,twoside]{amsart}
\usepackage[T1]{fontenc}
\let\polishl\l
\usepackage{palatino}
\usepackage{amssymb,amsmath}
\usepackage{amscd}
\usepackage{amsthm}
\usepackage{latexsym}
\usepackage[noadjust]{cite}
\usepackage{mathrsfs}
\usepackage{indentfirst}
\usepackage{graphicx}
\usepackage{color}
\newif\ifrevisionred
\revisionredfalse
\newenvironment{revision}{\begingroup\ifrevisionred\color{red}\fi}{\par\endgroup}
\newcommand{\rev}[1]{{\ifrevisionred\color{red}\fi#1}}
\newif\ifauditred
\auditredtrue

\usepackage{enumerate}
\usepackage{enumitem}
\allowdisplaybreaks[4]

\def\dfrac{\displaystyle\frac}
\let\oldsection\section
\renewcommand\section{\setcounter{equation}{0}\oldsection}

\newtheorem{theorem}{Theorem}[section]
\newtheorem{lemma}[theorem]{Lemma}

\newtheorem{definition}[theorem]{Definition}
\theoremstyle{definition}
\newtheorem{remark}[theorem]{Remark}
\theoremstyle{plain}

\newtheorem{corollary}[theorem]{Corollary}

\usepackage[colorlinks=true,
linkcolor=blue,
anchorcolor=red,
citecolor=blue,
linktocpage,pdfpagelabels,
bookmarksnumbered,bookmarksopen]{hyperref}

\def\N{\mathbb{N}}
\def\R{\mathbb{R}}
\def\RN{\mathbb{R}^N}
\def\n{\nabla}

\def\l{\lambda}

\def\irn{\int_{\RN}}

\newcommand{\weakto}{\rightharpoonup}

\newcommand{\cC}{{\mathcal C}}
\newcommand{\cD}{{\mathcal D}}

\newcommand{\cF}{{\mathcal F}}
\newcommand{\cG}{{\mathcal G}}

\newcommand{\cI}{{\mathcal I}}
\newcommand{\cJ}{{\mathcal J}}

\newcommand{\cN}{{\mathcal N}}

\newcommand{\cQ}{{\mathcal Q}}
\newcommand{\cR}{{\mathcal R}}

\title[Logarithmic equations with partial confinement]{Ground states and sharp logarithmic Sobolev inequalities for logarithmic Schrödinger equations with partial confinement}

\author[B. Li]{Baihong Li}
\author[J. Mederski]{Jaros{\polishl}aw Mederski}
\author[X. Zeng]{
	Xiangjian Zeng}

\address{B. Li
	\newline\indent School of Mathematics,
	\newline\indent Jilin University
	\newline\indent Changchun 130012, PR CHINA}
\email{\href{mailto:bhlimath@zohomail.com}{bhlimath@zohomail.com}}

\address{J. Mederski 
	\newline\indent Institute of Mathematics,
	\newline\indent Polish Academy of Sciences  
	\newline\indent ul. \'Sniadeckich 8, 00-656 Warsaw, Poland
	\newline\indent and
	\newline\indent 
	Faculty of Mathematics and Computer Science,		\newline\indent 
	Nicolaus Copernicus University, \newline\indent ul. Chopina 12/18,		 87-100 Toru\'n, Poland
}
\email{\href{mailto:jmederski@impan.pl}{jmederski@impan.pl}}

\address{X. Zeng
	\newline\indent Institute of Mathematics,
	\newline\indent Polish Academy of Sciences  
	\newline\indent ul. \'Sniadeckich 8, 00-656 Warsaw, Poland
	\newline\indent and
	\newline\indent Faculty of Mathematics, Informatics and Mechanics 
	\newline\indent University of Warsaw 
	\newline\indent ul. Banacha 2, 02-097 Warsaw, POLAND}
\email{\href{mailto:xzeng@impan.pl}{xzeng@impan.pl}}

\keywords{logarithmic Schr\"odinger equation; partial confinement; ground state; logarithmic Sobolev inequality; nonsmooth variational method}
\subjclass[2020]{35J20; 35Q55; 35A15; 35B38}

\begin{document}
	
	\begin{abstract}
We study the logarithmic Schr\"odinger equation with an arbitrary positive semidefinite quadratic potential and a periodic potential in the unconfined directions. The main variational difficulty is to turn a constrained minimizer into a critical point when the energy is extended-valued and not of class $\cC^1$ on the natural Hilbert space. We establish an abstract principle for convex lower semicontinuous energies with a smooth perturbation, allowing a scaling law for the full energy without requiring homogeneity of its convex part. Combined with a profile decomposition under integer translations, this principle yields a positive ground state for every rank of the quadratic form. When the periodic potential vanishes, we prove a sharp anisotropic logarithmic Sobolev inequality, determine its optimal constant and all equality cases, and classify positive ground states as explicit anisotropic Gaussons, unique up to translations along the kernel of the quadratic form. When the periodic component is constant, we also establish existence and classify all normalized ground states at every prescribed positive mass, with explicit formulas for the minimum energy and the associated Lagrange multiplier.
	\end{abstract}
	
	\maketitle

	\section{Introduction}

External confinement and nonlinear self-localization are two basic mechanisms for producing stationary wave profiles. When a trap acts only in selected directions and an optical lattice is imposed in the others, their interplay leads to a variational problem with a noncompact translation symmetry. For a logarithmic interaction, this geometric difficulty is accompanied by a second one: the energy may be extended-valued on the natural Hilbert space. We study ground states in this setting and develop an abstract constrained minimization principle that addresses the essential passage from an attained minimum to a solution of the equation.

More precisely, let $N\geq2$, let $A=A^{\mathsf T}\geq0$ be a real $N\times N$ matrix of rank $d\in\{0,\ldots,N\}$, and let $V\in\mathcal C(\R^{N-d},\R)$ be $\mathbb Z^{N-d}$-periodic. We seek ground state solutions of
\begin{equation}\label{matrix problem}
-\Delta_\xi w+
\left(\langle A\xi,\xi\rangle+V(Q_2\xi)\right)w
=w\log w^2,
\qquad \xi\in\RN,
\end{equation}
where $\langle\cdot,\cdot\rangle$ is the Euclidean inner product and
\begin{equation*}
Q=\begin{pmatrix}Q_1\\Q_2\end{pmatrix},
\qquad
Q_1\in\R^{d\times N},\quad Q_2\in\R^{(N-d)\times N},
\end{equation*}
is an orthogonal matrix furnished by the spectral theorem. We order its rows so that $QAQ^{\mathsf T}$ has the positive eigenvalues in the first $d$ positions and the rows of $Q_2$ form an orthonormal basis of $\ker A$. Thus $V$ acts in the unconfined directions. The change of variables $x=Q\xi$ reduces \eqref{matrix problem} to the diagonal form \eqref{problem} displayed below.

The external potential
\[
U_{\rm ext}(\xi):=\langle A\xi,\xi\rangle+V(Q_2\xi)
\]
is motivated, in particular, by models of Bose--Einstein condensates in optical lattices. Ostrovskaya and Kivshar \cite{OstrovskayaKivshar2004} describe Gross--Pitaevskii models combining an anisotropic parabolic trap with a periodic lattice and discuss regimes of strong transverse confinement. Salasnich \cite{Salasnich2009} starts from a three-dimensional Gross--Pitaevskii equation with an anisotropic harmonic potential in the transverse plane and a general axial potential, including a periodic optical lattice. Up to the choice of units, this gives
\[
U_{\rm ext}(y,z)=\lambda_1y_1^2+\lambda_2y_2^2+V_{\rm lat}(z),
\qquad \lambda_1,\lambda_2>0,\quad (y,z)\in\R^2\times\R,
\]
a special case of our trapping geometry. These works concern cubic interactions and effective nonpolynomial reductions. Here we investigate the same geometry with a logarithmic interaction, allowing any number of confined directions and arbitrary positive transverse frequencies.

The logarithmic Schr\"odinger equation originates in the nonlinear wave mechanics of Bia{\polishl}ynicki-Birula and Mycielski \cite{BialynickiBirulaMycielski1976}. Its localized Gaussian standing waves, or Gaussons, were identified in \cite{BialynickiBirulaMycielski1979}. The Cauchy problem and stability theory were developed in \cite{CazenaveHaraux1980,Cazenave1983,CazenaveLions1982}. For quadratic potentials, Carles and Ferriere \cite{CarlesFerriere2021} studied the dynamics, including partially confined configurations. In the stationary problem with full isotropic harmonic confinement, Ardila, Cely and Squassina \cite{ArdilaCelySquassina2020} proved existence and orbital stability of ground states and identified their Gaussian profiles. Full confinement supplies compactness in the weighted energy space. When $d<N$, profiles can instead escape along the unconfined directions, and a general periodic potential leaves the integer translation symmetry used in our compactness argument.

A particularly relevant antecedent is the work of Bia{\polishl}ynicki-Birula and Sowi\'nski \cite{BialynickiBirulaSowinski2004} on the logarithmic Schr\"odinger equation in a rotating anisotropic harmonic trap. They construct exact Gaussian wave packets and reduce their evolution to a finite system for the centers, widths and quadratic phases. This work provides a direct physical and analytic motivation for the quadratic potential in our logarithmic model. 

Ground states under partial confinement have also been studied for power nonlinearities. Bellazzini, Boussa\"id, Jeanjean and Visciglia \cite{BellazziniBoussaidJeanjeanVisciglia2017} constructed stable standing waves by local minimization at fixed mass, while Shan, Shuai and Wu \cite{ShanShuaiWu2025} obtained ground states and further solutions for a class of smooth superlinear nonlinearities. The logarithmic term violates the superlinear growth at $0$, i.e. $f(s)=o(s)$; in our case $f(s)/s=\log s^2\to-\infty$ as $s\to0$. When $d<N$, the energy may take the value $+\infty$ on the natural energy space and is not of class $\cC^1$. Consequently, smooth Nehari manifold arguments and the usual Lagrange multiplier theorem do not apply directly in the partially confined case. Under full confinement, the additional weighted integrability instead makes the energy finite and of class $\cC^1$.

Nonsmooth methods for logarithmic equations already have a substantial history. D'Avenia, Montefusco and Squassina \cite{DAveniaMontefuscoSquassina2014} treated the autonomous problem by nonsmooth critical point theory. Squassina and Szulkin \cite{SquassinaSzulkin2015,SquassinaSzulkin2017} established both a ground state and infinitely many geometrically distinct solutions for periodic potentials, and Ji and Szulkin \cite{JiSzulkin2016} studied coercive and asymptotically bounded potentials. More recently, Liu, Wei and Zou \cite{LiuWeiZou2026} combined nonsmooth critical point theory, localized variational methods and a max--min argument to construct infinitely many positive multi-bump solutions for nonperiodic potentials satisfying suitable decay assumptions. Their amplitude-rescaling argument \cite[Theorem~1.4 and the discussion following it]{LiuWeiZou2026} also yields solutions with prescribed $L^2$ norm and a corresponding Lagrange multiplier. The convex lower semicontinuous plus smooth decomposition used in this literature is also fundamental here. A direct minimization argument nevertheless has an additional obligation: attainment of the Nehari level must be supplemented by a proof that its minimizer is a critical point of the full energy. Existing approaches often establish this implication by a nonsmooth deformation or a topological argument. Neither attainment of the constrained minimum, even when obtained using Ekeland's variational principle, nor differentiability along a single scaling curve establishes full criticality by itself.

Our approach develops the constrained minimization strategy of Bieganowski, Ikoma and the second author \cite{BieganowskiIkomaMederski2026} for the nonlinear Born--Infeld equation, and of the second and third authors \cite{MederskiZeng2026} for the Born--Infeld--Choquard problem. The former work combines profile decomposition with a direct argument showing that Poho\v zaev minimizers are nonsmooth critical points; Appendix~A of the latter formulates an abstract version of this mechanism. That abstract setting cannot be applied directly to the natural decomposition and amplitude scaling of the logarithmic energy considered here. Its requirement that the convex part scale homogeneously is not satisfied. Moreover, when unconfined directions remain, the parameter projecting a function onto the Nehari set need not be continuous on the nonzero effective domain equipped with the topology of $X$. These obstructions require new ingredients in the passage from an attained constrained minimum to a solution.

We overcome these difficulties with a new abstract minimization principle adapted to logarithmic nonlinearities; see Theorem~\ref{abstract minimizer lemma}. Combined with profile decomposition, it establishes attainment of the constrained minimum and shows that its minimizers yield solutions. This extends the direct variational approach to a broader class of nonsmooth problems in mathematical physics.

We also extend the classical Euclidean logarithmic Sobolev inequality \cite{LiebLoss2001,DelPinoDolbeault2003} to partial and full quadratic confinement; see Theorem~\ref{sharp partial logarithmic inequality}. For constant $V$, its optimal constant and equality cases give a complete classification of ground states and normalized ground states of every positive mass as explicit Gaussians.

To state our main results, we write the periodicity assumption explicitly as
 \begin{equation*}
V(z+e_j)=V(z),\qquad z\in\R^{N-d},\quad j=1,\ldots,N-d,
\end{equation*}
where $e_j$ denotes the $j$-th standard basis vector of $\R^{N-d}$.
By the choice of $Q$, we have
	\begin{equation}\label{matrix diagonalization}
		QAQ^{\mathsf T}
		=\operatorname{diag}(\lambda_1,\ldots,\lambda_d,0,\ldots,0),
		\qquad \lambda_i>0.
	\end{equation}
	Set
	\begin{equation*}
		x=Q\xi=(y,z),\qquad
		y=Q_1\xi,\quad z=Q_2\xi,\qquad
		u(x)=w(Q^{\mathsf T}x).
	\end{equation*}
	Since $Q$ is orthogonal, it follows that
	\begin{equation*}
		\begin{aligned}
			(\Delta_\xi w)(Q^{\mathsf T}x)
			&=\Delta_x u(x),\\
			\langle AQ^{\mathsf T}x,Q^{\mathsf T}x\rangle
			&=\sum_{i=1}^d\lambda_i y_i^2,\\
			V(Q_2Q^{\mathsf T}x)&=V(z).
		\end{aligned}
	\end{equation*}
	Therefore, \eqref{matrix problem} is equivalent to
	\begin{equation}\label{problem}
		-\Delta u+
		\left(\sum_{i=1}^d\lambda_i y_i^2+V(z)\right)u
		=u\log u^2,
		\qquad x=(y,z)\in\R^d\times\R^{N-d}.
	\end{equation}
	We use $x=(y,z)$ throughout the transformed problem and define
	\begin{equation*}
		W(y):=
		\begin{cases}
			0, & d=0,\\
			\displaystyle\sum_{i=1}^d\lambda_i y_i^2, & 1\leq d\leq N.
		\end{cases}
	\end{equation*}
	For $d\geq1$, the parameters $\lambda_1,\ldots,\lambda_d$ are the positive eigenvalues of $A$. Observe that the substitution $u=e^{(\inf V)/2}v$ transforms
	\eqref{problem} into the same equation with the nonnegative
	potential $V-\inf V$. Since this transformation preserves
	ground states, we may assume without loss of generality
	that $V\geq0$.
	
	When $d=0$, there are no such parameters and no $y$-variables. Moreover, one has $A=0$ and $W\equiv0$, while $Q_2=Q$ and $x=z\in\RN$. Hence \eqref{problem} reduces to the equation with a purely periodic potential,
	\begin{equation*}
		-\Delta u+V(x)u=u\log u^2,\qquad x\in\RN,
		\qquad V(x+e_j)=V(x),\quad j=1,\ldots,N.
	\end{equation*}
	Throughout the paper, empty sums and products are understood as $0$ and $1$, respectively.
	There is no quadratic confinement, and the equation is invariant under translations by $\mathbb{Z}^N$. 
	
 When $d=N$, the $z$-variable is absent and we use the convention $V\equiv0$. Then \eqref{problem} reduces to
	\begin{equation*}
		-\Delta v+W(x)v=v\log v^2\qquad\hbox{in }\RN.
	\end{equation*}
 Our first result is the following.

	\begin{theorem}\label{theorem:existence}
		Let $N\geq 2$ and $0\leq d\leq N$. Assume that  $\lambda_1,\ldots,\lambda_d>0$, $V\in\mathcal C(\R^{N-d},\R)$
		is nonnegative and $1$-periodic in each unconfined variable. Then \eqref{problem} has a positive ground state solution $u\in H^1(\R^N)$. Moreover, every ground state has constant sign.
	\end{theorem}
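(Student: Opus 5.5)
We minimize the energy over the Nehari set, recover compactness from integer translations, and then pass from the minimizer to a genuine critical point by a nonsmooth argument. We work in
\[
X:=\Big\{u\in H^1(\RN):\irn W(y)u^2\,dx<\infty\Big\},\qquad \|u\|_X^2=\irn\big(|\n u|^2+(1+W+V)u^2\big)\,dx,
\]
and with the energy
\[
\cI(u)=\frac12\irn\big(|\n u|^2+(W+V+1)u^2\big)\,dx-\frac12\irn u^2\log u^2\,dx .
\]
Following the standard decomposition (Cazenave, Squassina--Szulkin), we split $\cI=\Phi+\Psi$.

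The two pieces are:
\begin{itemize}
\item $\Psi$, collecting the part of $-\frac12 u^2\log u^2$ coming from $|u|\ge e^{-3}$, which is $\cC^1$ on $X$ by subcritical growth;
\item $\Phi$, collecting the rest, which is convex, lower semicontinuous and $[0,+\infty]$-valued.
\end{itemize}
For $u$ in the effective domain, the Nehari functional is
\[
\cJ(u)=\irn\big(|\n u|^2+(W+V)u^2-u^2\log u^2\big)\,dx .
\]
Along the amplitude scaling $t\mapsto tu$ one has
\[
\cI(tu)=t^2\Big(\cI(u)-\tfrac12\log t^2\irn u^2\Big),
\]
so each $u\ne0$ has a unique $t_u>0$ with $tu$ on $\cN=\{\cJ=0,\ u\ne0\}$. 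On $\cN$ we get $\cI(u)=\frac12\|u\|_2^2$.

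The proof proceeds in four steps.

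\emph{Step 1: positivity of the level.} We set $c=\inf_\cN\cI$ and show $c>0$. By the logarithmic inequality $\int u^2\log u^2\le C\|u\|_2^2+\frac{\varepsilon}{2}\|\n u\|_2^2+\dots$ (Gagliardo--Nirenberg), any $u\in\cN$ has $\|u\|_2$ bounded below.

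\emph{Step 2: attainment of $c$.} Take a minimizing sequence $u_n\in\cN$. We may replace $u_n$ by $|u_n|$, since $\cI$ and $\cJ$ are invariant under this. The sequence is bounded in $X$, because $\|u_n\|_2^2\to 2c$ and $\cJ(u_n)=0$ controls $\int|\n u_n|^2+Wu_n^2$ by $\int u_n^2\log u_n^2$, which is superquadratically small relative to the gradient.
\begin{itemize}
\item Confinement in $y$ gives compactness in the $y$-variable: the embedding is locally compact in $z$, uniformly in $y$, thanks to the weight $W$.
\item By a Lions-type lemma, vanishing along $\mathbb Z^{N-d}$ translations in $z$ would force $\int (u_n^2\log u_n^2)^+\to0$. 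Then $\cJ(u_n)=0$ would force $\|u_n\|\to0$, contradicting Step 1.
\item So after integer translations in $z$ we get $u_n\weakto u\ne0$. Such translations preserve the problem by periodicity of $V$.
\item A Brezis--Lieb splitting for both the quadratic and the logarithmic terms (a profile decomposition) gives $\cJ(u)\le0$. Rescaling by $t_u\le1$ and using weak lower semicontinuity of $\|\cdot\|_2$ shows $t_uu\in\cN$ with $\cI(t_uu)\le c$.
\end{itemize}
So a nonnegative minimizer exists.

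\emph{Step 3: the minimizer is a critical point.} This is the main obstacle. We must show that a minimizer $u$ satisfies
\[
\Phi(v)-\Phi(u)+\Psi'(u)(v-u)\ge0\qquad\text{for all }v\in X,
\]
which is the weak form of \eqref{problem}. Here we would invoke the abstract principle, Theorem~\ref{abstract minimizer lemma}. Its hypotheses to check are the scaling law for $\cI$ above (no homogeneity of $\Phi$ needed) and the minimizing property on $\cN$.

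If we had to argue directly, the natural route is as follows. Fix $v$ with $\Phi(v)<\infty$ and consider $u_s=u+s(v-u)$. Project onto $\cN$ via $t_s=t_{u_s}$, which is explicitly
\[
\log t_s^2=\frac{\cJ(u_s)}{\|u_s\|_2^2}.
\]
Minimality gives $\cI(t_su_s)\ge\cI(u)$. By the scaling identity and convexity of $\Phi$ along the segment, we differentiate at $s=0^+$. The key point is that $\frac{d}{dt}\cI(tu)|_{t=1}=0$, so first-order changes in $t_s$ do not contribute. We therefore only need $t_s\to1$ and a one-sided bound on $\cJ(u_s)$. Both follow from convexity and lower semicontinuity along segments, even though $t$ need not be continuous in the $X$-topology.

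\emph{Step 4: positivity and constant sign.} Positivity of the nonnegative minimizer then follows from $u\in L^\infty_{loc}$ and elliptic regularity, together with the strong maximum principle applied to $-\Delta u+(W+V-\log u^2)^+u\ge0$ near zeros. There $-\log u^2\to+\infty$ helps, so Vázquez's maximum principle applies.

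For constant sign, let $w$ be any ground state. Then $|w|$ is also a minimizer on $\cN$, hence a solution by Step 3, hence positive by Step 4. Since $|w|>0$, $w$ never vanishes and is continuous, so $w$ has constant sign.
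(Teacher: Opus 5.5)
Your proposal is correct and follows essentially the paper's route: Nehari minimization with compactness recovered modulo $\mathbb Z^{N-d}$-translations, Theorem~\ref{abstract minimizer lemma} with $S_tu=tu$ to make the minimizer a Szulkin critical point, and a V\'azquez-type strong maximum principle applied to $|u|$. The only real variation is that you get $c>0$ directly from the lower bound on $\lVert u\rVert_2$ over $\cN$, rather than from the mountain-pass identification $c=m>0$.

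Two small cautions. In your optional direct argument for Step~3, convexity only gives the upper bound $\cJ(u_s)\le Cs$, which controls $t_s-1$ when $t_s>1$. The case $t_s<1$ has to come from minimality itself: $c\le\frac12\lVert u_s\rVert_2^2\,e^{\cJ(u_s)/\lVert u_s\rVert_2^2}$ gives $\cJ(u_s)\ge -Cs$, which is exactly the two-case analysis inside the abstract theorem. In Step~4, the blow-up of $-\log u^2$ near zeros is the obstruction, not a help; V\'azquez's principle applies because $\beta(s)=Ks-2s\log s$ (with $B'=\beta$) satisfies $\int_0^\varepsilon ds/\sqrt{B(s)}=\infty$.
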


Note that, for $d<N$, translations in $z$ prevent compact embedding of the natural energy space into $L^p(\RN)$, $2\leq p<2^*$, with $2^*=+\infty$ if $N=2$. At the same time, the {\em energy}
\begin{equation*}
\cI(u)=\frac12\irn\left(|\n u|^2+(W(y)+V(z)+1)u^2\right)dx
-\frac12\irn u^2\log u^2dx
\end{equation*}
may take the value $+\infty$ on this space. We use the decomposition $\cI=\Psi-\Phi$, where $\Psi$ is convex and lower semicontinuous and $\Phi$ is of class $\cC^1$, following \cite{SquassinaSzulkin2015,SquassinaSzulkin2017,JiSzulkin2016}. This framework treats all ranks uniformly, including full confinement, where stronger compactness is available. A {\em ground state} means a nonzero solution of least energy among all nonzero solutions with finite energy.

The proof has two distinct components. To obtain a minimizer, we analyze a minimizing sequence on the Nehari set $\cN$. The profile decomposition of Lemma~\ref{profiledecomposition} follows the surviving symmetry group $\mathbb Z^{N-d}$: confinement prevents escape in $y$, while integer shifts account for loss of compactness in $z$. The quadratic terms and the convex part admit suitable lower bounds, and the smooth nonlinear terms split. On $\cN$ one has $\cI(u)=\frac12\lVert u\rVert_2^2$. A nonzero profile can therefore be projected onto $\cN$ and compared with the least energy level; the resulting chain of inequalities forces it to carry the whole mass and yields strong convergence after integer translations.

The main variational difficulty is to show that the minimizer so obtained is a critical point of the full energy on $X$. Theorem~\ref{abstract minimizer lemma} resolves this issue for $\mathcal I=\mathcal A-\mathcal B$, with $\mathcal A$ proper, convex and lower semicontinuous and $\mathcal B\in\cC^1$. For a family of transformations $S_t$, it uses the representation
\[
\mathcal I(S_tu)=\alpha(t)\mathcal A(u)-\mathcal Q(t,u),
\]
a unique strict fibering maximum, a nondegeneracy condition at the minimizing fiber, and controlled dependence of the fiber terms on $u$. In particular, the theorem uses local Lipschitz estimates for $\mathcal Q$ and $\partial_t\mathcal Q/\alpha'$, together with continuity of the projection parameter along convex segments approaching the minimizer. These estimates replace the $\cC^1$ auxiliary implicit parametrization used in \cite[Lemmas 4.1 and 5.1]{BieganowskiIkomaMederski2026}. Unlike the abstract setting of \cite[Appendix A]{MederskiZeng2026}, the present theorem does not assume exact homogeneity of $\mathcal A$ under the chosen scaling. Thus the argument can accommodate the logarithmic correction in the fibering identity below.

For the logarithmic problem the scaling is $S_tu=tu$, and
\[
\cI(tu)=t^2\cI(u)-t^2\log t\,\lVert u\rVert_2^2.
\]
The additional logarithmic term is incorporated into $\mathcal Q$. Convex perturbations of a minimizer, followed by projection back onto the constraint, then yield the full variational inequality
\[
\Psi(v)-\Psi(u)-\Phi'(u)(v-u)\geq0\qquad\text{for every }v\in X.
\]
This is criticality in the sense of Szulkin \cite{Szulkin1986}, which gives the weak equation. The proof thus extends the minimizer-to-critical-point mechanism from the Born--Infeld setting to a logarithmic energy with a different scaling structure. 

	Our second result concerns the case where $V$ is constant. By the amplitude rescaling above, it suffices to consider $V\equiv0$. We establish a sharp anisotropic logarithmic Sobolev inequality adapted to the quadratic potential $W$. Its proof combines the Euclidean logarithmic Sobolev inequality with an oscillator estimate in each confined direction. The equality cases then give the complete classification of ground states for the unperturbed equation.
	Set
	\begin{equation*}
		\beta_i=\frac{1+\sqrt{1+4\lambda_i}}2,\qquad
		T_{\boldsymbol\lambda}=\sum_{i=1}^d\beta_i+N-d,\qquad
		M_*=\pi^{N/2}\left(\prod_{i=1}^d\beta_i\right)^{-1/2}
		e^{T_{\boldsymbol\lambda}}.
	\end{equation*}
	For $d\geq1$, we have $\beta_i^2-\beta_i=\lambda_i$ and $\beta_i>1$ for $1\leq i\leq d$. When $d=0$, there are no parameters $\beta_i$, and the empty sum and product conventions give $T_{\boldsymbol\lambda}=N$ and $M_*=\pi^{N/2}e^N$. Define the following {\em anisotropic Gaussian}
	\begin{equation*}
		U(y,z):=e^{T_{\boldsymbol\lambda}/2}
		\exp\left(-\frac{\sum_{i=1}^d\beta_i y_i^2+|z|^2}{2}\right),
	\end{equation*}
	and, for $\eta\in\R^{N-d}$, set
	\begin{equation*}
		U_\eta(y,z):=U(y,z-\eta).
	\end{equation*}
	Direct integration gives $\lVert U_\eta\rVert_2^2=M_*$. When $d=N$, the $z$-variable and the translation parameter $\eta$ are absent.
	
	\begin{theorem}[Sharp anisotropic logarithmic Sobolev inequality]\label{sharp partial logarithmic inequality}
		Let $N\geq2$ and $d\in\{0,\ldots,N\}$. For every real-valued $u\in H^1(\RN)$ satisfying
		\begin{equation*}
			\irn W(y)u^2dx<\infty,\qquad \lVert u\rVert_2=1,
		\end{equation*}
		we have
		\begin{equation}\label{sharp partial log}
			\irn u^2\log u^2dx
			\leq\lVert\n u\rVert_2^2+\irn W(y)u^2dx-\log M_*.
		\end{equation}
		Equality holds if and only if
		\begin{equation}\label{sharp equality case}
			\begin{aligned}
				u(y,z)
				&=\pm\pi^{-N/4}\left(\prod_{i=1}^d\beta_i\right)^{1/4}
				\exp\left(-\frac{\sum_{i=1}^d\beta_i y_i^2+|z-\eta|^2}{2}\right)\\
				&=\pm M_*^{-1/2}U_\eta(y,z),
				\qquad \eta\in\R^{N-d}.
			\end{aligned}
		\end{equation}
	\end{theorem}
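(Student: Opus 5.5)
Apply the Euclidean logarithmic Sobolev inequality to a Gaussian-weighted version of $u$, so that the confining potential is absorbed by a ground-state-substitution (oscillator) identity. Write $u=g\,v$ with
\[
g(y):=\exp\Bigl(-\tfrac12\sum_{i=1}^d\mu_i y_i^2\Bigr),
\]
where the $\mu_i>0$ are to be chosen. Alternatively, and more transparently, rescale each confined direction: the Euclidean inequality in the form
\[
\irn u^2\log u^2\,dx\le \frac{a}{\pi}\lVert\n u\rVert_2^2-\frac N2\log a
\]
for $\lVert u\rVert_2=1$ and any $a>0$ (Lieb--Loss, del Pino--Dolbeault, with equality exactly for Gaussians of variance matched to $a$) has an anisotropic version. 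In that version a different parameter $a_i$ is used in each coordinate direction, obtained by applying the isotropic inequality after the linear change of variables $y_i\mapsto \sqrt{a_i}\,y_i$. The plan is therefore as follows.

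\textbf{Step 1 (anisotropic Euclidean LSI).} From the Euclidean inequality and a diagonal dilation, derive, for $\lVert u\rVert_2=1$ and every choice of $\gamma_1,\dots,\gamma_N>0$,
\[
\irn u^2\log u^2\le \sum_{j=1}^N\gamma_j^{-1}\lVert\partial_j u\rVert_2^2+\sum_{j=1}^N\log\gamma_j - N\log\pi - N,
\]
up to the normalization constants. Equality holds exactly for translated Gaussians $\exp(-\sum_j\gamma_j(x_j-x_j^0)^2/2)$, suitably normalized. I will fix the constants by testing on the Gaussian itself.

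\textbf{Step 2 (oscillator estimate).} For each confined direction use the one-dimensional identity
\[
\int|\partial_{y_i}u|^2+\lambda_iy_i^2u^2\ \ge\ \int\bigl(|\partial_{y_i}u|^2\cdot c\bigr)+\dots,
\]
or, more concretely, the ground-state substitution $u=e^{-\beta_i y_i^2/2}v$ and an integration by parts. Together with the moment identity $\int y_i^2u^2$, this shows that the combination
\[
\lVert\partial_{y_i}u\rVert_2^2+\lambda_i\int y_i^2u^2-\gamma_i^{-1}\lVert\partial_{y_i}u\rVert_2^2-\log\gamma_i
\]
is bounded below by $-\beta_i+\tfrac12\log\beta_i$ plus a constant. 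The simplest route is to choose $\gamma_i$ depending on $u$ optimally, or to use the Heisenberg-type inequality $\lVert\partial_{y_i}u\rVert_2^2\,\lVert y_iu\rVert_2^2\ge\frac14$ and minimize over $\gamma_i$ explicitly. For the unconfined directions take $\gamma_j=1$. Combining Step 1 with the optimal $\gamma_i=1/\beta_i$ (or with whatever value the computation with $\beta_i^2-\beta_i=\lambda_i$ dictates), I expect to arrive at \eqref{sharp partial log}, with constant $\log M_*=\frac N2\log\pi-\frac12\sum\log\beta_i+T_{\boldsymbol\lambda}$. Consistency is checked by plugging in $M_*^{-1/2}U_\eta$, for which all inequalities must be equalities.

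\textbf{Step 3 (equality).} If equality holds in \eqref{sharp partial log}, it holds in each intermediate inequality. Equality in the anisotropic Euclidean LSI forces $u$ to be a Gaussian with the prescribed widths, centered at some $x^0=(y^0,\eta)$. Equality in the oscillator/Heisenberg step then forces $y^0=0$, since the moment $\int y_i^2u^2$ is minimized at the centered profile. This yields \eqref{sharp equality case}; the sign $\pm$ comes from real-valuedness and the fact that the Euclidean LSI equality cases are $\pm$ Gaussians. Conversely, direct computation on $U_\eta$ gives equality.

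The main obstacle is Step 2: decoupling the $y_i$-gradient between the portion fed into the LSI and the portion controlling $\lambda_iy_i^2$, in a way that is sharp. I would first try the cleanest version. Apply the Euclidean LSI to $v=u\,e^{\sum\beta_i' y_i^2/2}$ with weight, i.e. use the Gaussian-measure (Gross) LSI in the $y$-directions with covariance $1/(2\beta_i-1)$-type parameters. The expansion
\[
\int|\n(ge^{-\phi})|^2=\int|\n g|^2e^{-2\phi}+\int g^2(|\n\phi|^2-\Delta\phi)e^{-2\phi}
\]
produces exactly the term $\beta_i^2y_i^2-\beta_i$, and $\beta_i^2-\beta_i=\lambda_i$ should make the potential terms match. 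Integrability (finite $\int Wu^2$) justifies these integrations by parts, via approximation by compactly supported functions.
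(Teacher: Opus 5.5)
Your plan is essentially the paper's proof: dilate by $\sqrt{\beta_i}$ in each confined direction, apply the sharp Euclidean inequality, and absorb the leftover $(1-\beta_i^{-1})\lVert\partial_{y_i}u\rVert_2^2$ against $\lambda_i\irn y_i^2u^2dx$ via the completed square $(1-\beta_i^{-1})\lVert\partial_{y_i}u\rVert_2^2+\lambda_i\irn y_i^2u^2dx-(\beta_i-1)=(1-\beta_i^{-1})\irn|\partial_{y_i}u+\beta_iy_iu|^2dx\geq0$. Your Heisenberg bound plus AM--GM gives the same estimate, since $(1-\beta_i^{-1})\lambda_i=(\beta_i-1)^2$, and the equality case $\partial_{y_i}u+\beta_iy_iu=0$ removes the $y$-centre exactly as in your Step~3. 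Only the bookkeeping needs fixing: in your notation the choice is $\gamma_i=\beta_i$, not $1/\beta_i$, since $1/\beta_i$ would put the coefficient $\beta_i>1$ on $\lVert\partial_{y_i}u\rVert_2^2$ and nothing could absorb it; the constant in Step~1 is $\frac12\sum_j\log\gamma_j-\frac N2\log\pi-N$; and in your ground-state variant the expansion carries $\Delta\phi-|\nabla\phi|^2$ (opposite sign to what you wrote), with the leftover $-\beta_iy_i^2$ cancelled by the term $-2\irn\phi u^2dx$ that the substitution produces inside $\irn u^2\log u^2dx$ rather than by $\lambda_iy_i^2$ (after this correction, Gross's inequality for $e^{-\beta_iy_i^2}dy_i$, with constant $\beta_i^{-1}<1$, makes that variant a valid alternative).
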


For $u\ne0$, set $v=u/\lVert u\rVert_2$. Since
	\begin{equation*}
		\irn u^2\log u^2dx
		=\lVert u\rVert_2^2\irn v^2\log v^2dx
		+\lVert u\rVert_2^2\log\lVert u\rVert_2^2,
	\end{equation*}
	applying Theorem \ref{sharp partial logarithmic inequality} to $v$ gives the following homogeneous form.
	\begin{corollary}[Homogeneous form]\label{homogeneous sharp inequality}
		Under the assumptions of Theorem \ref{sharp partial logarithmic inequality}, every real-valued $0\ne u\in H^1(\RN)$ with $\irn W(y)u^2dx<\infty$ satisfies
		\begin{equation*}
			\irn u^2\log u^2dx
			\leq\lVert\n u\rVert_2^2+\irn W(y)u^2dx
			+\lVert u\rVert_2^2\log\frac{\lVert u\rVert_2^2}{M_*}.
		\end{equation*}
		Equality holds if and only if $u=aU_\eta$ for some $a\in\R\setminus\{0\}$ and $\eta\in\R^{N-d}$.
	\end{corollary}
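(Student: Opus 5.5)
The inequality follows from Theorem~\ref{sharp partial logarithmic inequality} by normalization, as the text preceding the statement indicates; I only need to check the bookkeeping and transfer the equality cases.

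\emph{Inequality.} Fix a real-valued $0\neq u\in H^1(\RN)$ with $\irn W(y)u^2dx<\infty$. Set $m=\lVert u\rVert_2^2>0$ and $v=u/\sqrt m$. Then $v\in H^1(\RN)$, $\lVert v\rVert_2=1$ and $\irn W v^2dx=m^{-1}\irn Wu^2dx<\infty$, so $v$ is admissible in Theorem~\ref{sharp partial logarithmic inequality}. Since $u^2\log u^2=m\,v^2\log v^2+m\log m\,v^2$ pointwise, I have
\[
\irn u^2\log u^2dx=m\irn v^2\log v^2dx+m\log m .
\]
Both sides are well defined. The positive part of $v^2\log v^2$ is integrable by Sobolev embedding into $L^p$ for some $p>2$. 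Hence the left side lies in $[-\infty,\infty)$ and the identity holds in the extended sense. Multiplying \eqref{sharp partial log} for $v$ by $m$ and using the homogeneity $m\lVert\n v\rVert_2^2=\lVert\n u\rVert_2^2$ and $m\irn Wv^2dx=\irn Wu^2dx$, I get
\[
\irn u^2\log u^2dx\le \lVert\n u\rVert_2^2+\irn Wu^2dx-m\log M_*+m\log m ,
\]
which is exactly the claimed inequality, since $m\log m-m\log M_*=m\log(m/M_*)$.

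\emph{Equality cases.} Because every step above is an identity except the use of \eqref{sharp partial log}, equality for $u$ holds if and only if equality holds in \eqref{sharp partial log} for $v$. If equality holds for $u$, then in particular $\irn u^2\log u^2dx$ is finite, so I am not comparing two infinite quantities. By \eqref{sharp equality case}, equality for $v$ means $v=\pm M_*^{-1/2}U_\eta$ for some $\eta\in\R^{N-d}$. Hence $u=\sqrt m\,v=aU_\eta$ with $a=\pm\sqrt{m/M_*}\neq0$.

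Conversely, let $u=aU_\eta$ with $a\neq0$. Then $u\in H^1(\RN)$ and $\irn Wu^2dx<\infty$, because $U_\eta$ is a Gaussian. Since $\lVert U_\eta\rVert_2^2=M_*$, we have $m=a^2M_*$, and so $v=\operatorname{sign}(a)M_*^{-1/2}U_\eta$. This $v$ is an equality case of \eqref{sharp equality case}, so equality also holds for $u$. This proves both the inequality and the characterization of equality, and there is no substantive obstacle beyond the finiteness remark made above.
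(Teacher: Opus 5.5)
Your proposal is correct and follows the paper's own route. The paper likewise normalizes $v=u/\lVert u\rVert_2$, uses the scaling identity $\int u^2\log u^2=\lVert u\rVert_2^2\int v^2\log v^2+\lVert u\rVert_2^2\log\lVert u\rVert_2^2$, and applies Theorem~\ref{sharp partial logarithmic inequality} to $v$; your remarks on the possibly $-\infty$ entropy and on transferring the equality cases make explicit what the paper leaves implicit.
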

	
	To express the inequality in the original coordinates, for a real symmetric positive semidefinite $N\times N$ matrix $A$, set
	\begin{equation}\label{matrix Gaussian parameter}
		B_A:=\frac{I_N+\sqrt{I_N+4A}}2,\qquad
		M_A:=\frac{\pi^{N/2}e^{\operatorname{tr}B_A}}{\sqrt{\det B_A}},
	\end{equation}
	where $I_N$ stands for the identity $N\times N$-matrix. 
	The diagonalization \eqref{matrix diagonalization} gives
	\begin{equation*}
		QB_AQ^{\mathsf T}
		=\operatorname{diag}(\beta_1,\ldots,\beta_d,1,\ldots,1),
		\qquad M_A=M_*.
	\end{equation*}
	The orthogonal change of variables $x=Q\xi$, with $u(x)=w(Q^{\mathsf T}x)$, maps the centers $(0,\eta)$ to $h=Q^{\mathsf T}(0,\eta)\in\ker A$. Thus, Theorem \ref{sharp partial logarithmic inequality} yields the following matrix form and its equality cases.
	
	\begin{corollary}[Matrix form]\label{matrix sharp logarithmic inequality}
		Let $N\geq2$ and let $A$ be a real symmetric positive semidefinite $N\times N$ matrix, with $B_A$ and $M_A$ defined by \eqref{matrix Gaussian parameter}.
		For every real-valued $w\in H^1(\RN)$ with $\lVert w\rVert_2=1$ and $\int_{\RN}\langle A\xi,\xi\rangle w^2d\xi<\infty$, we have
		\begin{equation*}
			\int_{\RN}w^2\log w^2d\xi
			\leq\int_{\RN}\left(|\n_\xi w|^2+\langle A\xi,\xi\rangle w^2\right)d\xi
			-\log M_A.
		\end{equation*}
		Equality holds if and only if
		\begin{equation*}
			w(\xi)=\pm\frac{(\det B_A)^{1/4}}{\pi^{N/4}}
			\exp\left(-\frac{\langle B_A(\xi-h),\xi-h\rangle}{2}\right),
			\qquad \hbox{ for some }h\in\ker A.
		\end{equation*}
	\end{corollary}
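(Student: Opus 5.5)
The plan is to reduce Corollary~\ref{matrix sharp logarithmic inequality} to Theorem~\ref{sharp partial logarithmic inequality} by the orthogonal change of variables $x=Q\xi$, $u(x)=w(Q^{\mathsf T}x)$. Given $w$ as in the statement, I will first verify that $u$ satisfies the hypotheses of the theorem. Since $Q$ is orthogonal, the Jacobian is $1$, so $u\in H^1(\RN)$ with $\lVert u\rVert_2=\lVert w\rVert_2=1$ and $\lVert\n u\rVert_2=\lVert\n_\xi w\rVert_2$, because $\n_x u(x)=Q(\n_\xi w)(Q^{\mathsf T}x)$ and $Q$ preserves length. By \eqref{matrix diagonalization}, $\langle AQ^{\mathsf T}x,Q^{\mathsf T}x\rangle=W(y)$. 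Hence $\int W(y)u^2dx=\int\langle A\xi,\xi\rangle w^2d\xi<\infty$ and $\int u^2\log u^2dx=\int w^2\log w^2d\xi$. The inequality \eqref{sharp partial log} then transfers verbatim, once I check that $M_A=M_*$.

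To check $M_A=M_*$, I will use $QAQ^{\mathsf T}=D:=\operatorname{diag}(\lambda_1,\ldots,\lambda_d,0,\ldots,0)$. The matrix square root is defined by spectral calculus and commutes with orthogonal conjugation, so
\[
QB_AQ^{\mathsf T}=\tfrac12\bigl(I_N+\sqrt{I_N+4D}\bigr)=\operatorname{diag}(\beta_1,\ldots,\beta_d,1,\ldots,1).
\]
Here the entries $1$ come from $\lambda=0$, since $(1+\sqrt1)/2=1$. This gives $\operatorname{tr}B_A=\sum_i\beta_i+N-d=T_{\boldsymbol\lambda}$ and $\det B_A=\prod_i\beta_i$, so $M_A=M_*$.

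For the equality cases, equality for $w$ is equivalent to equality for $u$, since every term is preserved. By \eqref{sharp equality case}, this holds if and only if
\[
u(x)=\pm\pi^{-N/4}\Bigl(\prod_i\beta_i\Bigr)^{1/4}\exp\Bigl(-\tfrac12\langle QB_AQ^{\mathsf T}(x-(0,\eta)),x-(0,\eta)\rangle\Bigr)
\]
for some $\eta\in\R^{N-d}$. This uses $\sum_i\beta_iy_i^2+|z-\eta|^2=\langle QB_AQ^{\mathsf T}(x-(0,\eta)),x-(0,\eta)\rangle$, which holds because the diagonal matrix has entries $1$ in the $z$-block. Substituting $x=Q\xi$ and setting $h=Q^{\mathsf T}(0,\eta)$ gives $x-(0,\eta)=Q(\xi-h)$, so the quadratic form becomes $\langle B_A(\xi-h),\xi-h\rangle$. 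The prefactor equals $(\det B_A)^{1/4}\pi^{-N/4}$.

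It remains to see that $\eta\mapsto Q^{\mathsf T}(0,\eta)=Q_2^{\mathsf T}\eta$ is a bijection of $\R^{N-d}$ onto $\ker A$. This holds because the rows of $Q_2$ form an orthonormal basis of $\ker A$. No step is a real obstacle; the only points needing care are the functional-calculus identity for $\sqrt{I_N+4A}$ under conjugation and this identification of the translation set with $\ker A$.
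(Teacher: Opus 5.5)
Your proposal is correct and follows the same route as the paper: the orthogonal change of variables $x=Q\xi$, the spectral-calculus identity $QB_AQ^{\mathsf T}=\operatorname{diag}(\beta_1,\ldots,\beta_d,1,\ldots,1)$ giving $M_A=M_*$, and the identification of the centers $h=Q^{\mathsf T}(0,\eta)=Q_2^{\mathsf T}\eta\in\ker A$. The paper only sketches this reduction. You additionally verify that each term is invariant and that $\eta\mapsto Q_2^{\mathsf T}\eta$ is a bijection of $\R^{N-d}$ onto $\ker A$, both of which the paper leaves implicit.
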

	
	\begin{theorem}\label{gaussonexistence}
		Suppose that $V$ is constant in Theorem \ref{theorem:existence}. Then the positive ground state solution $u$ of \eqref{problem}  is unique up to  translations in the $z$--variables. More precisely, every positive ground state is of the form
		\begin{equation*}
			u(y,z)=e^{V/2}U_\eta(y,z),
			\qquad \eta\in\R^{N-d}.
		\end{equation*}
	\end{theorem}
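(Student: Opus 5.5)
By the amplitude rescaling $u=e^{V/2}v$ noted before Theorem~\ref{theorem:existence}, a function $u$ solves \eqref{problem} with constant $V$ if and only if $v$ solves the equation with $V\equiv0$. The energies satisfy $\cI_V(u)=e^{V}\cI_0(v)$, so ground states correspond to ground states and positivity is preserved. It therefore suffices to treat $V\equiv0$ and show that the positive ground states are exactly the $U_\eta$.

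The plan is to compute the ground state level $c$ on the Nehari set $\cN$ using Corollary~\ref{homogeneous sharp inequality}. For $u\in\cN$ (with $V\equiv0$) we have $\lVert\n u\rVert_2^2+\irn Wu^2\,dx+\lVert u\rVert_2^2=\irn u^2\log u^2\,dx$, and $\cI(u)=\frac12\lVert u\rVert_2^2$. Insert the Nehari identity into the homogeneous inequality; the gradient and $W$ terms cancel and we obtain
\[
\lVert\n u\rVert_2^2+\irn Wu^2\,dx+\lVert u\rVert_2^2\le\lVert\n u\rVert_2^2+\irn Wu^2\,dx+\lVert u\rVert_2^2\log\frac{\lVert u\rVert_2^2}{M_*},
\]
which gives $\log(\lVert u\rVert_2^2/M_*)\ge1$, hence $\lVert u\rVert_2^2\ge eM_*$.

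This bound needs reconciling with $\lVert U_\eta\rVert_2^2=M_*$. I expect a normalization discrepancy: the energy has the extra term $+u^2$, i.e.\ the split $(W+V+1)u^2-u^2\log u^2$ inside $\cI$. I would recheck directly that $U$ solves $-\Delta U+WU=U\log U^2$. One computes $\log U^2=T_{\boldsymbol\lambda}-\sum\beta_iy_i^2-|z|^2$ and $-\Delta U=(T_{\boldsymbol\lambda}-\sum\beta_i^2y_i^2-|z|^2)U$. Then $-\Delta U+WU-U\log U^2=\sum(\lambda_i-\beta_i^2+\beta_i)y_i^2U=0$, so $U$ is a solution. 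The Nehari identity for solutions is then $\lVert\n u\rVert_2^2+\irn Wu^2\,dx=\irn u^2\log u^2\,dx$, without the extra $\lVert u\rVert_2^2$; I had misread the $+1$ in $\cI$, which is cancelled by the logarithmic part as written. Redoing the comparison gives $\log(\lVert u\rVert_2^2/M_*)\ge0$, so $\lVert u\rVert_2^2\ge M_*$ for every nonzero finite-energy solution. On the other hand $U_\eta$ is a finite-energy solution with $\lVert U_\eta\rVert_2^2=M_*$. Hence the ground state level equals $\frac12M_*$, and it is attained by every $U_\eta$.

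For the classification, let $u$ be any positive ground state. By Theorem~\ref{theorem:existence} such states exist; I also need that every ground state lies in $H^1$ with $\irn Wu^2<\infty$ and satisfies the Nehari identity, which follows by testing the weak equation (Szulkin criticality) with $u$ itself. Then $\cI(u)=\frac12 M_*$ forces $\lVert u\rVert_2^2=M_*$. Consequently every inequality in the chain above is an equality, in particular the equality in Corollary~\ref{homogeneous sharp inequality}. Its equality case gives $u=aU_\eta$, and positivity together with $\lVert u\rVert_2^2=M_*$ gives $a=1$. Undoing the rescaling yields $u=e^{V/2}U_\eta$.

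The main obstacle is justifying the Nehari identity for arbitrary finite-energy weak solutions. Testing with $u$ requires $u\log u^2\cdot u\in L^1$ and finiteness of the weighted term. I would use that finite energy gives $u^2\log u^2\in L^1$, and split $\cI=\Psi-\Phi$ with $\Psi(u)\ge\Psi(0)$. Szulkin's inequality with $v=tu$, $t\to1^{\pm}$, then yields the identity, using the fibering formula $\cI(tu)=t^2\cI(u)-t^2\log t\,\lVert u\rVert_2^2$.
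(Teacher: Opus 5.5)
Your proposal is correct and follows essentially the same route as the paper. Both arguments reduce to $V\equiv0$ by the amplitude rescaling, combine the Nehari identity with the sharp inequality to get $\lVert u\rVert_2^2\geq M_*$, compare with the explicit finite-energy solutions $U_\eta$ of mass $M_*$ to force equality, and conclude by the equality characterization. The only differences are minor: you apply Corollary~\ref{homogeneous sharp inequality} directly where the paper normalizes $v=u/\sqrt m$, and you justify the Nehari identity by inserting $v=tu$ into Szulkin's inequality where the paper relies on its Section~2 lemmas. In a final write-up, delete the false start with the spurious $+\lVert u\rVert_2^2$ term and the resulting bound $\lVert u\rVert_2^2\geq eM_*$, since only the corrected identity $\lVert\n u\rVert_2^2+\irn Wu^2\,dx=\irn u^2\log u^2\,dx$ is valid.
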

The orthogonal transformation $x=Q\xi$, with $w(\xi)=u(Q\xi)$, gives the corresponding classification for \eqref{matrix problem}: when $V$ is constant, the positive ground state is unique up to translations in $\ker A$.

Amplitude rescaling also relates solutions of logarithmic equations to solutions with prescribed mass; see \cite[Theorem~1.4 and the discussion following it]{LiuWeiZou2026}. For constant $V$, our sharp inequality additionally identifies all global energy minimizers at each prescribed mass.

\begin{theorem}[Normalized ground states]\label{normalized ground states}
Let $N\geq2$, $0\leq d\leq N$, $\lambda_i>0$ for $1\leq i\leq d$, and $V_0\in\R$. For each $\rho>0$, define the  mass constraint
\begin{equation*}
\mathcal S_\rho:=\left\{u\in H^1(\RN):
\irn W(y)u^2dx<\infty,\quad \lVert u\rVert_2^2=\rho\right\}
\end{equation*}
and the energy
\begin{equation*}
\cI(u):=\frac12\irn\left(|\nabla u|^2+(W(y)+V_0+1)u^2\right)dx
-\frac12\irn u^2\log u^2dx.
\end{equation*}
 A normalized ground state of mass $\rho$ is a global minimizer of $\cI$ on $\mathcal S_\rho$. Then
\begin{equation}\label{normalized minimum energy}
\min_{u\in\mathcal S_\rho}\cI(u)
=\frac{\rho}{2}\left(1+V_0-\log\frac{\rho}{M_*}\right),
\end{equation}
and all normalized ground states are exactly
\begin{equation}\label{normalized Gaussian profiles}
u=\pm\sqrt{\frac{\rho}{M_*}}\,U_\eta,
\qquad \eta\in\R^{N-d}.
\end{equation}
They satisfy
\begin{equation}\label{normalized logarithmic equation}
-\Delta u+(W(y)+V_0)u+\omega_\rho u=u\log u^2,
\qquad \omega_\rho=\log\frac{\rho}{M_*}-V_0.
\end{equation}
In particular, the positive normalized ground state is unique up to translations in the unconfined variables. When $d=N$, the translation parameter is absent; the constant $V_0$ is an additive shift of the fully confining potential.
\end{theorem}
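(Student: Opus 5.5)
The plan is to reduce everything to the homogeneous form, Corollary~\ref{homogeneous sharp inequality}, applied with $W$ and then shifted by the constant $V_0$.

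\emph{Lower bound.} Fix $u\in\mathcal S_\rho$. If $\irn u^2\log u^2dx=-\infty$ then $\cI(u)=+\infty$ and there is nothing to prove. Otherwise Corollary~\ref{homogeneous sharp inequality} with $\lVert u\rVert_2^2=\rho$ gives
\begin{equation*}
\frac12\irn u^2\log u^2dx\leq\frac12\lVert\n u\rVert_2^2+\frac12\irn W(y)u^2dx+\frac{\rho}{2}\log\frac{\rho}{M_*}.
\end{equation*}
Substituting this into $\cI$, the gradient and $W$ terms cancel, and we obtain
\begin{equation*}
\cI(u)\geq\frac{\rho}{2}\Bigl(1+V_0-\log\frac{\rho}{M_*}\Bigr).
\end{equation*}
Equality in this bound holds precisely when equality holds in the corollary, that is, when $u=aU_\eta$. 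The mass constraint then forces $a^2M_*=\rho$, so $a=\pm\sqrt{\rho/M_*}$.

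\emph{Attainment.} It remains to check that these Gaussians lie in $\mathcal S_\rho$ and attain the bound. They clearly belong to $H^1$, have finite $\irn Wu^2$, and have the correct mass. Hence the bound is an attained minimum, which proves \eqref{normalized minimum energy}. The same equality characterization shows that every minimizer must be of the form \eqref{normalized Gaussian profiles}. Uniqueness of the positive normalized ground state up to translations in $z$ follows immediately, since the positive choice of sign is forced.

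\emph{Equation and multiplier.} I would verify \eqref{normalized logarithmic equation} by direct computation rather than by a Lagrange multiplier theorem, since $\cI$ is not $\cC^1$. Write $u=cU_\eta$ with $c^2=\rho/M_*$. Then
\begin{equation*}
\log u^2=\log c^2+T_{\boldsymbol\lambda}-\sum_i\beta_iy_i^2-|z-\eta|^2,
\end{equation*}
and
\begin{equation*}
-\Delta u/u=\sum_i(\beta_i-\beta_i^2y_i^2)+(N-d)-|z-\eta|^2.
\end{equation*}
Using $\beta_i^2-\beta_i=\lambda_i$, the quadratic terms match $W(y)$. The constant terms give $T_{\boldsymbol\lambda}$ on both sides, which leaves the multiplier $\omega_\rho=\log c^2-V_0=\log(\rho/M_*)-V_0$.

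The only delicate point is the case $\irn u^2\log u^2=-\infty$ together with the interpretation of $\cI$ as extended-valued. Since the negative part of $u^2\log u^2$ is the only possibly divergent term, $\cI(u)=+\infty$ there, and the inequality is trivial. When $d=N$, the same argument works with $\eta$ absent.
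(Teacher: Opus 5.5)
Your proposal is correct and follows essentially the same route as the paper. The lower bound and the classification of minimizers come from the homogeneous sharp inequality (Corollary~\ref{homogeneous sharp inequality}) and its equality cases, with the infinite-energy case handled trivially, and the equation with multiplier $\omega_\rho$ is checked by direct substitution of the explicit Gaussians rather than via a Lagrange multiplier theorem. Your computation of $-\Delta u/u$ and $\log u^2$ simply unpacks the identity $-\Delta U_\eta+WU_\eta=U_\eta\log U_\eta^2$ that the paper invokes.
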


At fixed mass, $V_0$ changes the minimum energy and the multiplier, while the minimizing profiles are independent of $V_0$. These profiles solve the equation with zero multiplier precisely when $\rho=e^{V_0}M_*$, in which case they are $\pm e^{V_0/2}U_\eta$.

	\begin{remark}\label{isotropic harmonic ground state}\label{unconfined logarithmic inequality}
		\begin{enumerate}[label={\rm (\roman*)}]
			\item When $d=N$, $V\equiv0$ and $\lambda_1=\cdots=\lambda_N=\gamma(\gamma-1)$ with $\gamma>1$, one has $\beta_i=\gamma$ for every $i$. Thus, Theorem \ref{gaussonexistence} gives the unique positive ground state
			\begin{equation*}
				U(x)=e^{N\gamma/2}\exp\left(-\frac{\gamma|x|^2}{2}\right).
			\end{equation*}
			For the equation
			\begin{equation*}
				-\Delta u+\bigl(\omega+\gamma(\gamma-1)|x|^2\bigr)u=u\log u^2,
				\qquad \omega\in\R,
			\end{equation*}
			the amplitude rescaling $u=e^{\omega/2}U$ yields
			\begin{equation*}
				u(x)=e^{(\omega+N\gamma)/2}\exp\left(-\frac{\gamma|x|^2}{2}\right),
			\end{equation*}
			which coincides with the positive ground state obtained in \cite[Theorem 1.2]{ArdilaCelySquassina2020}. Hence, in dimensions $N\geq2$, our characterization recovers their isotropic ground state profile and extends it to anisotropic and partial harmonic confinement.
			\item When $d=0$, one has $A=0$ and $W\equiv0$. There are no $y$-variables, so $x=z\in\RN$. Under the assumption $V\equiv0$, \eqref{problem} becomes
			\begin{equation}\label{fully unconfined equation}
				-\Delta u=u\log u^2\qquad\hbox{in }\RN.
			\end{equation}
			Then one has
			\begin{equation*}
				T_{\boldsymbol\lambda}=N,\qquad M_*=\pi^{N/2}e^N.
			\end{equation*}
			Thus, Theorem \ref{sharp partial logarithmic inequality} reduces to the classical  logarithmic Sobolev inequality
			\begin{equation*}
				\irn u^2\log u^2dx
				\leq\lVert\n u\rVert_2^2-N-\frac N2\log\pi,
				\qquad u\in H^1(\RN),\quad\lVert u\rVert_2=1.
			\end{equation*}
			Equality holds if and only if
			\begin{equation*}
				u(x)=\pm\pi^{-N/4}\exp\left(-\frac{|x-\eta|^2}{2}\right),
				\qquad\eta\in\RN.
			\end{equation*}
			By Theorem \ref{theorem:existence} and Theorem \ref{gaussonexistence}, the  ground states of \eqref{fully unconfined equation} are exactly
			\begin{equation*}
				u(x)=\pm e^{N/2}\exp\left(-\frac{|x-\eta|^2}{2}\right),
				\qquad\eta\in\RN,
			\end{equation*}
			and their energy is $\pi^{N/2}e^N/2$. The positive profile coincides with the Gausson identified in \cite[Theorem 1.2]{DAveniaMontefuscoSquassina2014} and \cite[Theorem 1.2(i)]{LiuSunZou2026}. More generally, multiplication by $e^{\omega/2}$ gives the positive ground states
			\begin{equation*}
				u(x)=\exp\left(\frac{\omega+N-|x-\eta|^2}{2}\right),
				\qquad \eta\in\RN,
			\end{equation*}
			for the equation $$-\Delta u+\omega u=u\log u^2.$$ 
		\end{enumerate}
	\end{remark}

The paper is organized as follows. Section~2 introduces the energy space and the nonsmooth critical point framework. Section~3 establishes the abstract minimization principle, develops the profile decomposition and proves Theorem~\ref{theorem:existence}. Section~\ref{section sharp anisotropic inequality} proves the sharp logarithmic Sobolev inequality and the ground state classification for constant $V$, using the reduction to $V\equiv0$. Section~\ref{section final remarks} discusses the relation with cubic Bose--Einstein models and the scope of the abstract framework for other nonlinearities.

	\section{Functional setting}
	In this section, we introduce the natural energy space and the variational framework for \eqref{problem}. We decompose the energy into a smooth part and a convex lower semicontinuous part, recall the notion of nonsmooth critical points, and establish their equivalence with weak solutions.
	
	We work in the real Hilbert space
	\begin{equation*}\label{space X}
		X=\left\{u\in H^1(\RN):\int_{\RN}W(y)u^2dx<+\infty\right\},
	\end{equation*}
	endowed with
	\begin{equation*}\label{norm X}
		(	 u,v)_X=\irn\left(\n u\cdot\n v+(W(y)+V(z)+1)uv\right)dx \quad{\rm and}\quad \lVert u\rVert_X=\sqrt{	( u,u)_X}.
	\end{equation*}
	It is obvious that the embedding $X\hookrightarrow H^1(\RN)$ is continuous, so that the embedding $X\hookrightarrow L^p(\RN)$ is continuous for any $p\in[2,\frac{2N}{N-2}]$ if $N\geq 3$ and $p\in[2,\infty)$ if $N=2$.

	\begin{definition}[Weak solution]
		{\rm We say that $u\in X$ is a {\em weak solution} of equation \eqref{problem}, if $u$ satisfies $ u^2\log u^2\in L^1(\RN)$ and			\begin{equation*}
				\irn\n u\cdot\n \varphi dx+\irn (W(y)+V(z))u\varphi dx=\irn u\varphi\log u^2dx,\quad  {\rm for~all~}\varphi\in \mathcal{C}_0^\infty(\RN).
		\end{equation*}}
	\end{definition}
	
	The corresponding functional $\cI:X\to\R\cup\{+\infty\}$ is defined by 
	\begin{equation*}
		\cI(u)=\frac12\irn\left(|\n u|^2+(W(y)+V(z)+1)u^2\right)dx
		-\frac12\irn u^2\log u^2dx
	\end{equation*}
	Thanks to the logarithmic Sobolev inequality 
	\begin{equation}\label{logsobolev inequality}
		\int_{\mathbb{R}^N} u^2 \log u^2 \leq \frac{a^2}{\pi} \|\nabla u\|_2^2 + \left(\log \|u\|_2^2 - N(1 + \log a)\right) \|u\|_2^2, \quad \text{for } u \in H^1(\mathbb{R}^N) \text{ and } a > 0,
	\end{equation}
	it is easy to verify that  $\cI(u)>-\infty$ for all $u\in X$.  Note that for $\varepsilon>0$ sufficiently small, there exists $C_\varepsilon>0$ such that
	\begin{equation}\label{logarithmic growth}
		|u^2\log u^2|\leq C_\varepsilon\left(|u|^{2-\varepsilon}+|u|^{2+\varepsilon}\right).
	\end{equation}
	When $d=N$, i.e. full confinement case,  it follows from \cite[Theorem 2.1]{Shuai2021} that the embedding $X\hookrightarrow L^q(\R^N)$ is continuous and compact for $q\in(\frac{2N}{N+2},\frac{2N}{N-2})$ if $N\geq 3$ and $q\in(1,\infty)$ if $N=2$.  Thus, from the growth \eqref{logarithmic growth}, we have $\cI<+\infty$ and $\cI$ is of class $\cC^1$ on $X$.
	However, for $d<N$, there exists some $u\in X$ such that $\irn u^2\log u^2dx=-\infty$, so the functional $\cI$ fails to be of $\mathcal{C}^1$ class.  
	Here,  we follow the ideas  used in Squassina and Szulkin \cite{SquassinaSzulkin2015} and in Ji and Szulkin \cite{JiSzulkin2016}.

	For $\delta\in(0,e^{-3/2})$ we define
	\begin{equation*}
		F_1(s):=\left\{
		\begin{aligned}
			&0,&s=0,\\
			&-\frac{1}{2}s^2\log s^2,&\quad0<|s|<\delta,\\
			&-\frac{1}{2}s^2(\log \delta^2+3)+2\delta|s|-\frac{1}{2}\delta^2,&\quad|s|\ge\delta
		\end{aligned}
		\right.
	\end{equation*}
	and
	\begin{equation*}
		F_2(s):=\left\{
		\begin{aligned}
			&0,&\quad|s|<\delta,\\
			&\frac{1}{2}s^2\log(s^2/\delta^2) +2\delta|s|-\frac{3}{2}s^2-\frac{1}{2}\delta^2,&\quad|s|\ge\delta.
		\end{aligned}
		\right.
	\end{equation*}
	It is obvious that $F_1,F_2\in\mathcal{C}^1(\mathbb{R},\mathbb{R})$ and 
	\begin{equation*}
		F_2(s)-F_1(s)=\frac{1}{2}s^2\log s^2,\quad s\in\mathbb{R}.
	\end{equation*}
	Thus,  the functional $\cI$ can be  rewritten as
	\begin{equation}\label{decompose}
		\cI(u)=\Psi(u)-\Phi(u),
	\end{equation}
	where
	\begin{equation*}
		\Phi(u):=\irn F_2(u)dx -\dfrac{1}{2}\Vert u\Vert_X^2
	\end{equation*}
	and
	\begin{equation*}
		\Psi(u):=\irn F_1(u)dx.
	\end{equation*}
	Note that there exists $C_p>0$ such that 
	\begin{equation}\label{F2primegrowth}
		|F_2^\prime(s)|\leq C_p|s|^{p-1}\quad  {\rm for~}s\in\R~{\rm and~}p\in(2,2^*).
	\end{equation}  By standard arguments, it is straightforward to check that $\Phi\in\mathcal{C}^1(X,\mathbb{R})$. Observe that $\Psi$ is convex and non-negative. Besides, by Fatou's lemma, $\Psi$ is lower semicontinuous. Now, we have decomposed the functional $\cI$ as the sum of a $\mathcal{C}^1$ functional $-\Phi$ and a convex lower semicontinuous functional $\Psi$.

	We recall the following definitions from \cite{JiSzulkin2016}, see also \cite{SquassinaSzulkin2015,Szulkin1986}.
	\begin{definition}\label{definition1}
		{\rm 	Let $Y$ be a real Banach space, $Y^\prime$ be its dual space  and  $\left\langle\cdot,\cdot\right\rangle$ be the duality pair between $Y^\prime$ and $Y$.  Let $J=\Psi-\Phi$, where $\Phi\in\mathcal{C}^1(Y,\mathbb{R})$,  $\Psi:Y\rightarrow(-\infty,+\infty]$ is convex, lower semicontinuous and $\Psi\not\equiv+\infty$.
			\hspace*{\fill}
			\begin{enumerate}[label={\rm (\roman*)}]
				\item The set $D(J):=\{u\in Y:J(u)<+\infty \}$ is called the {\em effective domain} of $J$.
				\item The set
				\begin{equation*}
					\partial J(u):=\{\omega\in Y^\prime:\Psi(v)-\Psi(u)-\left\langle\Phi^\prime(u),v-u\right\rangle\geq\left\langle \omega,v-u\right\rangle,~\forall v\in Y\}.
				\end{equation*}
				is called the {\em subdifferential} of $J$ at $u$.
				\item A point $u\in Y$ is said to be a {\em critical point} of $J$ if $u\in D(J)$ and $0\in\partial J(u)$, i.e.
				\begin{equation*}
					\Psi(v)-\Psi(u)-\left\langle\Phi^\prime(u),v-u\right\rangle\geq0, \quad \forall v\in Y.
				\end{equation*}
				For \(u\notin D(J)\), we set \(\partial J(u)=\varnothing\).
		\end{enumerate}}
	\end{definition}
	In our paper, we set $Y=X$ and $\cI=\Psi-\Phi$ defined as \eqref{decompose}.
	For $u\in D(\cI)$ and $z\in \cC_0^\infty(\mathbb{R}^N)$, let
	\begin{equation}\label{weakderivative}
		\left\langle \cI'(u),z\right\rangle
		:=
		\int_{\mathbb{R}^N}F_1'(u)z\,dx-	\left\langle \Phi'(u),z\right\rangle.
	\end{equation}
	Then $\cI'(u)$ is a densely defined linear operator, and we may define
	\begin{equation*}
		\left\lVert \cI'(u)\right\rVert
		:=
		\sup\left\{
		\left\langle \cI'(u),z\right\rangle:
		z\in \cC_0^\infty(\mathbb{R}^N),\
		\lVert z\rVert_X\leq1
		\right\}.
	\end{equation*}
	If $\lVert \cI'(u)\rVert$ is finite, then $\cI'(u)$ may be extended
	to a bounded operator in $X$ and is thus an element of $X'$.
Here $\cC_0^\infty(\RN)$ is dense in $X$: one first uses cutoffs and then mollifies on bounded sets, where $W+V$ is bounded.
	
	Arguing in a similar way as in \cite[Lemma 2.4]{SquassinaSzulkin2017}, we can obtain the following lemma.
	\begin{lemma}\label{Iprimelambda=omega}
		Let $u\in D(\cI)$ and $\cI'(u)\in X'$, then  $\omega\in \partial\cI(u)$
		if and only if  $\omega= \cI'(u)$.
	\end{lemma}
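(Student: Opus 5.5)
We prove the two implications separately, following \cite[Lemma 2.4]{SquassinaSzulkin2017}.

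\emph{Step 1: $\cI'(u)\in\partial\cI(u)$.} Fix $v\in X$ with $\Psi(v)<\infty$; if $\Psi(v)=+\infty$ the required inequality is trivial. Set $w=v-u$. For $t\in(0,1]$, convexity of $F_1$ gives pointwise
\[
F_1(u+tw)-F_1(u)\le t\bigl(F_1(v)-F_1(u)\bigr).
\]
Hence
\[
\Psi(v)-\Psi(u)\ge\frac{1}{t}\bigl(\Psi(u+tw)-\Psi(u)\bigr)\ge\irn F_1'(u)w\,dx
\]
by monotonicity of difference quotients of convex functions. The integral on the right is well defined in $(-\infty,+\infty]$, since the difference quotients decrease to $F_1'(u)w$ and are bounded above by $F_1(v)-F_1(u)\in L^1$; monotone convergence then justifies the limit. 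It remains to show $\irn F_1'(u)w\,dx=\langle\cI'(u)+\Phi'(u),w\rangle$, where $\cI'(u)$ denotes the bounded extension. This identity is the main obstacle, because $F_1'(u)w$ need not be integrable a priori and $w$ is not a test function.

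To handle it, approximate $w$ by truncations and cutoffs instead of mollifications. Take $w_n=\chi_n\,\max(-n,\min(w,n))$ with a smooth cutoff $\chi_n$, adjusted so that $|w_n|\le|w|$ and $w_n\to w$ in $X$. Then $F_1'(u)w_n$ is integrable: $|F_1'(s)|\le C|s|(1+|\log s^2|)$ for small $s$ and is linear for large $s$, and $u^2\log u^2\in L^1$ because $u\in D(\cI)$. Since $\cI'(u)$ is continuous on $X$, $\langle\cI'(u),w_n\rangle\to\langle\cI'(u),w\rangle$. For these bounded, compactly supported $w_n$, density of $\cC_0^\infty$ together with dominated convergence extends the formula \eqref{weakderivative}.

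To pass to the limit in $\irn F_1'(u)w_n\,dx$, split $F_1'(u)w=(F_1'(u)w)^+-(F_1'(u)w)^-$. Fatou's lemma applies to the positive part. Adding the upper bound $F_1'(u)w\le F_1(v)-F_1(u)$, which lies in $L^1$, yields $\irn F_1'(u)w\,dx\ge\langle\cI'(u)+\Phi'(u),w\rangle$ up to the sign bookkeeping. Only this inequality is needed, so Step 1 is complete.

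\emph{Step 2: uniqueness.} Let $\omega\in\partial\cI(u)$. Test the defining inequality with $v=u+t\varphi$ for $\varphi\in\cC_0^\infty(\RN)$ and $t>0$. The function $\Psi(u+t\varphi)$ is finite, since $\varphi$ is bounded with compact support and $u\in D(\Psi)$. Dividing by $t$ and letting $t\to0^+$, dominated convergence on the support of $\varphi$ gives $\langle\cI'(u),\varphi\rangle\ge\langle\omega,\varphi\rangle$. Replacing $\varphi$ by $-\varphi$ turns this into an equality. By density of $\cC_0^\infty(\RN)$ in $X$, we conclude $\omega=\cI'(u)$.
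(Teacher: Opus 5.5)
The paper gives no proof beyond pointing to Squassina--Szulkin, Lemma~2.4, and your two-step structure is the standard one for this lemma. Step~2 is fine. In Step~1 the truncation--cutoff approximants $w_n=\chi_n\max(-n,\min(w,n))$ are the right choice. They satisfy $w_n=\theta_n w$ with $0\le\theta_n\le1$ and $\theta_n\to1$ a.e., so they have the same sign as $w$, which mollified approximants would not. For them $\langle\cI'(u)+\Phi'(u),w_n\rangle=\irn F_1'(u)w_n\,dx$ holds.

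The gap is in the limit passage, which is exactly the step you call the main obstacle. The convexity bound $F_1'(u)w\le F_1(v)-F_1(u)$ controls the \emph{positive} part. So $\irn F_1'(u)w\,dx$ is a priori defined only in $[-\infty,+\infty)$, not in $(-\infty,+\infty]$ as you claim. Follow the orientation you describe: apply Fatou to the positive part and treat the negative part as integrable, as your stated range suggests. The conclusion is then $\irn F_1'(u)w\,dx\le\langle\cI'(u)+\Phi'(u),w\rangle$, the reverse of what is needed. The phrase ``up to the sign bookkeeping'' therefore hides the whole content of the lemma.

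The repair uses only your ingredients. Dominate the positive parts by
\[
\theta_n(F_1'(u)w)^+\le\bigl(F_1(v)-F_1(u)\bigr)^+\le F_1(v)\in L^1(\RN),
\]
and apply Fatou's lemma to the negative parts $\theta_n(F_1'(u)w)^-$. Since $\irn F_1'(u)w_n\,dx\to\langle\cI'(u)+\Phi'(u),w\rangle$, this gives
\[
\irn(F_1'(u)w)^-dx\le\irn(F_1'(u)w)^+dx-\langle\cI'(u)+\Phi'(u),w\rangle<\infty,
\]
which is the required inequality.

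More simply, you can bypass $\irn F_1'(u)w\,dx$ altogether. Pointwise,
\[
F_1'(u)w_n=\theta_nF_1'(u)w\le\theta_n\bigl(F_1(v)-F_1(u)\bigr),
\]
and the right-hand side is dominated by $F_1(u)+F_1(v)\in L^1(\RN)$. Hence
\[
\langle\cI'(u)+\Phi'(u),w\rangle=\lim_{n\to\infty}\irn F_1'(u)w_n\,dx\le\lim_{n\to\infty}\irn\theta_n\bigl(F_1(v)-F_1(u)\bigr)\,dx=\Psi(v)-\Psi(u),
\]
which is precisely $\cI'(u)\in\partial\cI(u)$.
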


	According to
	Lemma \ref{Iprimelambda=omega},
	the following results follows  from \cite[Lemma 2.4]{JiSzulkin2016}.
	\begin{lemma}\label{weaksolutionequiv}
		If $u\in D(\cI)$, then $u$ is a critical point of $\cI$ if and only if $u$ is a weak solution of \eqref{problem}.
	\end{lemma}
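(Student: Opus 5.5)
The plan is to prove the two implications separately, using the characterization of the subdifferential in Lemma~\ref{Iprimelambda=omega} together with the structure $\cI=\Psi-\Phi$, where $\Psi(u)=\irn F_1(u)dx$ is convex and $F_1\ge0$.

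\emph{Critical point $\Rightarrow$ weak solution.} Let $u\in D(\cI)$ satisfy $\Psi(v)-\Psi(u)-\langle\Phi'(u),v-u\rangle\ge0$ for all $v\in X$. Fix $\varphi\in\cC_0^\infty(\RN)$ and take $v=u+t\varphi$ with $t>0$. Dividing by $t$ and letting $t\to0^+$ leads to the directional derivative of $\Psi$ at $u$ along $\varphi$. On $\operatorname{supp}\varphi$, the difference quotients $(F_1(u+t\varphi)-F_1(u))/t$ are controlled by $\sup_{|\tau|\le1}|F_1'(u+\tau\varphi)||\varphi|$. Since $|F_1'(s)|\le C(|s|\,|\log|s||+|s|)$ for small $|s|$ and $|F_1'(s)|\le C(|s|+1)$ for large $|s|$, this majorant is integrable on the compact support. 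Dominated convergence then gives $\irn F_1'(u)\varphi\,dx-\langle\Phi'(u),\varphi\rangle\ge0$. Replacing $\varphi$ by $-\varphi$ yields equality, that is, $\langle\cI'(u),\varphi\rangle=0$ in the sense of \eqref{weakderivative}. Expanding $\Phi'(u)$ and using $F_2'-F_1'=u\log u^2$ gives the weak formulation. For the requirement $u^2\log u^2\in L^1$: $\Psi(u)<\infty$ gives $\irn F_1(u)<\infty$, and $F_2(u)\in L^1$ by \eqref{F2primegrowth}, so $u^2\log u^2=2(F_2(u)-F_1(u))\in L^1$.

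\emph{Weak solution $\Rightarrow$ critical point.} Let $u\in X$ be a weak solution with $u\in D(\cI)$. Then $\langle\cI'(u),\varphi\rangle=0$ for all test functions, so $\lVert\cI'(u)\rVert=0$ and $\cI'(u)$ extends to $0\in X'$. Lemma~\ref{Iprimelambda=omega} then gives $0=\cI'(u)\in\partial\cI(u)$, which is exactly criticality.

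Lemma~\ref{Iprimelambda=omega} itself would be proved as in \cite[Lemma 2.4]{SquassinaSzulkin2017}, following \cite[Lemma 2.4]{JiSzulkin2016}. The key step, and the main obstacle, is showing that $\irn F_1'(u)(v-u)\le\Psi(v)-\Psi(u)$ for \emph{all} $v\in D(\Psi)$, not just for $v=u+\varphi$. One approximates $v-u$ by truncated and cut-off functions $w_n$ with $u+w_n\in D(\Psi)$ and $w_n\to v-u$ in $X$. Then one passes to the limit using the convexity inequality $F_1(u+w_n)-F_1(u)\ge F_1'(u)w_n$, together with Fatou's lemma and monotone convergence on the sets where $F_1'(u)w_n$ has a fixed sign. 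I would first try truncations of the form $w_n=\chi_n\,\max(-n,\min(v-u,n))$ with $\chi_n$ smooth cutoffs. The delicate point is controlling $F_1(u+w_n)$ near $\{u=0\}$, which uses the monotonicity of $F_1$ in $|s|$ on $[0,\delta]$.
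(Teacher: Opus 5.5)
Your argument is correct and follows the same route as the paper, which deduces the lemma from Lemma~\ref{Iprimelambda=omega} together with the argument of \cite[Lemma 2.4]{JiSzulkin2016}; your direct directional-derivative proof of the forward implication is a useful addition, since Lemma~\ref{Iprimelambda=omega} as stated assumes $\cI'(u)\in X'$, which is not known a priori for a critical point. One small computational slip: $F_2'(s)-F_1'(s)=s\log s^2+s$, not $s\log s^2$, and it is exactly this extra $s$ that cancels the $+1$ in $(\cdot,\cdot)_X$ to give the weak formulation.
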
  
	
	\section{Ground State}
	This section is devoted to the proof of Theorem \ref{theorem:existence} by minimization on the Nehari constraint. We first establish an abstract principle ensuring that constrained minimizers are critical points. We then develop a profile decomposition for bounded sequences to recover compactness of minimizing sequences up to translations, and conclude by proving that the minimum is attained by a positive weak solution.
	
	Define the Nehari functional 
	\begin{equation*}
		\cJ(u):=\irn\left(|\n u|^2+(W(y)+V(z))u^2\right)dx-\irn u^2\log u^2dx.
	\end{equation*}
From Lemma \ref{Iprimelambda=omega} and Lemma \ref{weaksolutionequiv}, all nontrivial weak solutions  lie in	the Nehari set 
	\begin{equation*}
		\cN=\left\{u\in D(\cI)\setminus\{0\}:\cJ(u)=0
		\right\}.
	\end{equation*}
	Indeed, by Lemma \ref{weaksolutionequiv}, if $u$ is a weak solution of \eqref{problem}, then it is a critical point of $\cI$, i.e., $0\in \partial\cI(u)$. From Lemma \ref{Iprimelambda=omega} we know that $0=\cI'(u)$ defined by \eqref{weakderivative}, which is obviously a bounded operator in $X$. Then it is sufficient to let it test $u$, namely let $z=u$ in \eqref{weakderivative}, to get $u\in \cN$.

	If $u\in\cN$, then we have
	\begin{equation}\label{energy identity}
		\cI(u)=\frac12\lVert u\rVert_2^2.
	\end{equation}
	Define the least energy
	\begin{equation*}
		c:=\inf_{u\in\cN}\cI(u).
	\end{equation*}

	\begin{lemma}\label{projection}
		For each $u\in D(\cI)\setminus\{0\}$, there exists a unique $t_u>0$ such that $t_uu\in\cN$ and $\cI(t_uu)=\max_{t>0}\cI(tu)$. Moreover, if $u_n\to u$ in $X$, $u_n\in D(\cI)\setminus\{0\}$ and $\Psi(u_n)\to\Psi(u)$, then $t_{u_n}\to t_u$.
	\end{lemma}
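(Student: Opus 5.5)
The proof rests on the explicit fibering identity. For $u\in D(\cI)\setminus\{0\}$ and $t>0$ we have $\int (tu)^2\log(tu)^2 = t^2\int u^2\log u^2 + t^2\log t^2\,\lVert u\rVert_2^2$. Set
\[
a(u):=\irn\left(|\n u|^2+(W(y)+V(z))u^2\right)dx-\irn u^2\log u^2dx=\cJ(u),
\]
which is finite because $u\in D(\cI)$: the logarithmic Sobolev inequality \eqref{logsobolev inequality} bounds $\int u^2\log u^2$ from above, and finiteness of $\Psi(u)$ bounds it from below. Then
\[
\cJ(tu)=t^2\bigl(a(u)-\log t^2\,\lVert u\rVert_2^2\bigr),
\qquad
\frac{d}{dt}\cI(tu)=\frac1t\cJ(tu)=t\bigl(a(u)-\log t^2\,\lVert u\rVert_2^2\bigr).
\]
This derivative is positive for small $t$, negative for large $t$, and vanishes exactly once, at
\[
t_u=\exp\Bigl(\frac{a(u)}{2\lVert u\rVert_2^2}\Bigr).
\]
So $t_uu\in\cN$, $t_uu\in D(\cI)$ since $D(\cI)$ is invariant under scalings, and $t_u$ is the unique strict maximum of $t\mapsto\cI(tu)$ on $(0,\infty)$. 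Uniqueness of $t_u$ with $t_uu\in\cN$ also follows from this formula, since $\cJ(tu)=0$ has only one positive root.

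For the continuity statement, the formula reduces everything to showing $a(u_n)\to a(u)$ and $\lVert u_n\rVert_2\to\lVert u\rVert_2>0$. The second is immediate from $X\hookrightarrow L^2$. For the first, write
\[
a(u)=\lVert u\rVert_X^2-\lVert u\rVert_2^2-2\irn F_2(u)dx+2\Psi(u),
\]
using $F_2-F_1=\tfrac12 s^2\log s^2$. The first three terms are continuous on $X$: the norm terms trivially, and $\int F_2(u)$ by \eqref{F2primegrowth} and the embedding into $L^p$, $p\in(2,2^*)$, as in the $\cC^1$ property of $\Phi$. The last term converges by the hypothesis $\Psi(u_n)\to\Psi(u)$. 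Hence $t_{u_n}\to t_u$.

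The only delicate point is this last term. Without the assumption $\Psi(u_n)\to\Psi(u)$, lower semicontinuity of $\Psi$ gives only $\liminf\Psi(u_n)\geq\Psi(u)$, and $t_u$ need not be continuous. That is exactly why the hypothesis appears in the statement, so no further obstacle remains once it is used.
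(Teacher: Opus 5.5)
Your proposal is correct and follows essentially the same route as the paper. The paper also derives the explicit formula $t_u=\exp\bigl(\cI(u)/\lVert u\rVert_2^2-\tfrac12\bigr)$, which coincides with yours since $\cJ(u)=2\cI(u)-\lVert u\rVert_2^2$, and it gets continuity from $\cI(u_n)\to\cI(u)$ (via $\Phi\in\cC^1$ and $\Psi(u_n)\to\Psi(u)$) together with $\lVert u_n\rVert_2\to\lVert u\rVert_2>0$; your splitting of $a(u)$ into norm terms, $\int F_2$ and $\Psi$ is just an unpacked version of that step.
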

	\begin{proof}
		For  every $u\in D(\cI)\backslash\{0\}$ and $t>0$, define
		\begin{equation*}
			G(t,u):=\cI(tu)=t^2\cI(u)-t^2\log t\irn u^2dx.
		\end{equation*}
		Then one has
		\begin{equation*}
			\dfrac{dG(t,u)}{dt}=2t\cI(u)-2t\log t\irn u^2dx-t\irn u^2dx=\dfrac{\cJ(tu)}{t},
		\end{equation*}
		which yields that 
		\begin{equation*}
			tu\in\cN\iff  \dfrac{dG(t,u)}{dt}=0.
		\end{equation*}
		Hence, the projection number $t_u$ is  the unique positive root of the   following   algebraic equation
		\begin{equation*}
			\cI(u)=\dfrac{2\log t+1}{2}\irn u^2dx.
		\end{equation*}
		Equivalently,
		\begin{equation}\label{projection explicit formula}
			t_u=\exp\left(\frac{\cI(u)}{\lVert u\rVert_2^2}-\frac12\right).
		\end{equation}
		Note that
		\begin{equation*}
			\lim_{t\to 0} G(t,u)=0\quad{\rm and}\quad \lim_{t\to +\infty} G(t,u)=-\infty.
		\end{equation*}
		Then we know that $G(t,u)$ is strictly increasing on $(0,t_u)$ and strictly decreasing on $(t_u,+\infty)$. Thus, $t_u$ is a maximum point of $\cI(tu)$. Besides, one has 
		\begin{equation*}
			\dfrac{d^2G(t,u)}{dt^2}\Big|_{t=t_u}=2\cI(u)-2\log t_u\irn u^2dx-3\irn u^2dx=-2\irn u^2dx<0.
		\end{equation*}
		If $u_n\to u$ in $X$ and $\Psi(u_n)\to\Psi(u)$, then $\cI(u_n)\to\cI(u)$, because $\Phi\in \cC^1(X,\R)$. Since $\lVert u_n\rVert_2^2\to\lVert u\rVert_2^2>0$, \eqref{projection explicit formula} gives $t_{u_n}\to t_u$.
	\end{proof}
	
We will use the following abstract theorem, which is a modification of \cite[Lemma A.1]{MederskiZeng2026}.
	\begin{theorem}\label{abstract minimizer lemma}
		Let $X$ be a Banach space and let
		\[
		\mathcal I=\mathcal A-\mathcal B,
		\]
		where $\mathcal A:X\to(-\infty,+\infty]$ is proper, convex and lower
		semicontinuous, and $\mathcal B\in \cC^1(X,\R)$. Set
		\[
		D(\mathcal A):=\{u\in X:\mathcal A(u)<+\infty\}.
		\]
		Let $S:(0,+\infty)\times X\to X$, $S_tu:=S(t,u)$, be such that
		\[
		S_1u=u,\qquad\text{for every }u\in X.
		\]
		Suppose that
		\begin{equation*}
			\gamma_u(t):=\mathcal I(S_tu)
			=\alpha(t)\mathcal A(u)-\mathcal Q(t,u)
		\end{equation*}
		for every $t>0$ and $u\in D(\mathcal A)\setminus\{0\}$, where
		$\mathcal Q:(0,+\infty)\times X\to\R$ and
		$\alpha\in \cC^1(0,+\infty)$ satisfies
		\[
		\alpha(1)>0,\qquad \alpha'(t)>0\quad\text{for every }t>0.
		\]
		
		Assume that the following conditions hold:
		\begin{enumerate}[label={\rm (Q\arabic*)}]
			\item\label{Q1}
			For every $u\in X$, the map $t\mapsto\mathcal Q(t,u)$ belongs
			to $\cC^1(0,+\infty)$. Set
			\[
			\mathcal R(t,u):=\partial_t\mathcal Q(t,u),
			\qquad
			\mathcal G(t,u):=\frac{\mathcal R(t,u)}{\alpha'(t)}.
			\]
			The map $\mathcal R$ is continuous on $(0,+\infty)\times X$,
			and $\partial_t\mathcal G$ exists and is continuous on
			$(0,+\infty)\times X$.
			
			\item\label{Q2}
			The maps $\mathcal Q$ and $\mathcal G$ are locally Lipschitz continuous with respect
			to $u$, locally uniformly in $t$. More precisely, for every
			$(t_*,u_*)\in(0,+\infty)\times X$, there exist $\delta,\rho,L>0$
			such that
			\[
			|\mathcal Q(t,u)-\mathcal Q(t,v)|
			+|\mathcal G(t,u)-\mathcal G(t,v)|
			\leq L\lVert u-v\rVert
			\]
			for every $|t-t_*|<\delta$ and
			$u,v\in B_\rho(u_*)$.
		\end{enumerate}
		
		Define
		\[
		\mathcal P(u)
		:=\gamma_u'(1)
		=\alpha'(1)\mathcal A(u)-\mathcal R(1,u)
		\]
		and
		\[
		\mathcal M
		:=\{u\in D(\mathcal A)\setminus\{0\}:\mathcal P(u)=0\}.
		\]
		Let $u_0\in\mathcal M$ satisfy
		\[
		\mathcal I(u_0)=\inf_{\mathcal M}\mathcal I.
		\]
		Suppose moreover that:
		\begin{enumerate}[label={\rm (A\arabic*)}]
			\item\label{A1}
			For every $u\in D(\mathcal A)\setminus\{0\}$, there exists a
			unique $t_u>0$ such that $S_{t_u}u\in\mathcal M$. Moreover,
			$t_u$ is the unique maximum point of
			$t\mapsto\mathcal I(S_tu)$.
			
			\item\label{A2}
			For every $u\in D(\mathcal A)$, if
			$v_\tau=(1-\tau)u_0+\tau u$, then
			$v_\tau\neq0$ for $\tau>0$ small and
			$t_{v_\tau}\to1$ as $\tau\to0^+$.
			
			\item\label{A3}
			There holds
			\[
			\left.
			\frac{d}{dt}
			\left(\frac{\mathcal R(t,u_0)}{\alpha'(t)}\right)
			\right|_{t=1}>0.
			\]
		\end{enumerate}
		Then $u_0$ is a critical point of $\mathcal I$ in the sense of
		Szulkin, namely
		\[
		\mathcal A(u)-\mathcal A(u_0)
		-\mathcal B'(u_0)(u-u_0)\geq0
		\qquad\text{for every }u\in X.
		\]
	\end{theorem}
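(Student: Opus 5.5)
The plan is to test the minimality of $u_0$ along convex segments and then return to $\mathcal M$ by the fibering map. The goal is to show that the correction caused by this return is of order $o(\tau)$. Fix $u\in X$. If $\mathcal A(u)=+\infty$ the inequality is trivial, so assume $u\in D(\mathcal A)$. Put $v_\tau=(1-\tau)u_0+\tau u$ and $t_\tau:=t_{v_\tau}$, so that $t_\tau\to1$ by \ref{A2}. By uniqueness in \ref{A1}, $t_{u_0}=1$. Since $\mathcal I(u_0)\le\mathcal I(S_{t_\tau}v_\tau)$, we can write
\[
\mathcal I(v_\tau)-\mathcal I(u_0)\ \ge\ -\bigl[\mathcal I(S_{t_\tau}v_\tau)-\mathcal I(v_\tau)\bigr].
\]
Convexity gives $\mathcal A(v_\tau)\le\mathcal A(u_0)+\tau(\mathcal A(u)-\mathcal A(u_0))$, and $\mathcal B\in\cC^1$ gives $\mathcal B(v_\tau)-\mathcal B(u_0)=\tau\mathcal B'(u_0)(u-u_0)+o(\tau)$. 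Dividing by $\tau$ and letting $\tau\to0^+$ yields the Szulkin inequality, provided that
\[
\mathcal I(S_{t_\tau}v_\tau)-\mathcal I(v_\tau)=o(\tau). \tag{$\ast$}
\]

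To prove $(\ast)$, set $f_\tau(t)=\gamma_{v_\tau}(t)=\alpha(t)\mathcal A(v_\tau)-\mathcal Q(t,v_\tau)$. Then
\[
f_\tau'(t)=\alpha'(t)\bigl[\mathcal A(v_\tau)-\mathcal G(t,v_\tau)\bigr].
\]
The constraint $S_{t_\tau}v_\tau\in\mathcal M$ means $f_\tau'(t_\tau)=0$, that is, $\mathcal A(v_\tau)=\mathcal G(t_\tau,v_\tau)$. Likewise $\mathcal A(u_0)=\mathcal G(1,u_0)$. Hence
\[
f_\tau(t_\tau)-f_\tau(1)=\int_1^{t_\tau}\alpha'(s)\bigl[\mathcal G(t_\tau,v_\tau)-\mathcal G(s,v_\tau)\bigr]ds .
\]
By \ref{Q1}, $\partial_t\mathcal G$ is continuous near $(1,u_0)$. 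Hence it is bounded on a neighbourhood of $(1,u_0)$, and this integral is $O(|t_\tau-1|^2)$. So it suffices to prove $|t_\tau-1|=O(\tau)$. This is the main step, and the nondegeneracy \ref{A3} is used here. By \ref{A3} and continuity of $\partial_t\mathcal G$, there is $\kappa>0$ with
\[
\mathcal G(t,u_0)-\mathcal G(1,u_0)\ge\kappa(t-1)\quad\text{for }t\ge1,\qquad \mathcal G(t,u_0)-\mathcal G(1,u_0)\le-\kappa(1-t)\quad\text{for }t\le1,
\]
for $t$ near $1$. Moreover $\|v_\tau-u_0\|=\tau\|u-u_0\|$, so \ref{Q2} gives $|\mathcal G(t,v_\tau)-\mathcal G(t,u_0)|+|\mathcal Q(t,v_\tau)-\mathcal Q(t,u_0)|\le L\tau\|u-u_0\|$.

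\emph{Upper bound.} Suppose $t_\tau\ge1$. Then
\[
\kappa(t_\tau-1)\le\mathcal G(t_\tau,u_0)-\mathcal G(1,u_0)\le\mathcal G(t_\tau,v_\tau)-\mathcal A(u_0)+L'\tau=\mathcal A(v_\tau)-\mathcal A(u_0)+L'\tau\le C\tau,
\]
where the last inequality uses convexity. Hence $t_\tau-1\le C\tau$.

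\emph{Lower bound.} Suppose $t_\tau<1$. The same identities give
\[
\mathcal A(v_\tau)\le\mathcal A(u_0)-\kappa(1-t_\tau)+L'\tau .
\]
Insert this into the minimality relation $\mathcal I(u_0)\le f_\tau(t_\tau)$ and use \ref{Q2} for $\mathcal Q$. This gives
\[
\mathcal I(u_0)\le\gamma_{u_0}(t_\tau)-\alpha(t_\tau)\kappa(1-t_\tau)+C\tau .
\]
Since $1$ is the maximum point of $\gamma_{u_0}$ by \ref{A1}, we have $\gamma_{u_0}(t_\tau)\le\mathcal I(u_0)$. Also $\alpha(t_\tau)\ge\alpha(1)/2>0$ for $t_\tau$ close to $1$. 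Therefore $1-t_\tau\le C\tau$.

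Together these give $|t_\tau-1|=O(\tau)$, hence $(\ast)$ with an $O(\tau^2)$ error, and the theorem follows. The delicate point is this two-sided linear bound on $t_\tau-1$. The two sides need different ingredients: the upper bound uses only convexity and the Nehari identity. The lower bound also needs the minimality of $u_0$ and the maximality of $t_{u_0}=1$ on its fiber. I expect it to be the main obstacle, because no lower semicontinuity estimate for $\mathcal A$ along $v_\tau$ is available to replace it.
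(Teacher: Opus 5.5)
Your proof is correct and is essentially the paper's argument. Both test minimality along $v_\tau=(1-\tau)u_0+\tau u$, project back to $\mathcal M$, and reduce everything to the linear bound $|t_\tau-1|=O(\tau)$, obtained from (A3), (Q2), convexity and minimality. Your integral identity with its $O(|t_\tau-1|^2)$ error, and your direct two-sided bounds, simply streamline the paper's mean value argument (which needs $\mathcal A(v_n)\to\mathcal A(u_0)$ from lower semicontinuity) and its contradiction arguments (including the case split on the sign of $\mathcal A(v_n)-\mathcal A(u_0)$).

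One justification needs correcting. In the abstract setting, $S_{t_\tau}v_\tau\in\mathcal M$ means $\gamma_{S_{t_\tau}v_\tau}'(1)=0$, which is not the same as $f_\tau'(t_\tau)=0$ unless $S$ has a semigroup property. The identity $\mathcal A(v_\tau)=\mathcal G(t_\tau,v_\tau)$ should instead be deduced from (A1): $t_\tau$ is an interior maximum point of the $\cC^1$ function $f_\tau$, exactly as the paper argues.
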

	
	\begin{proof}
		Let $u\in X$. If $u\notin D(\mathcal A)$, then the conclusion is
		immediate. Hence, we may assume that $u\in D(\mathcal A)$.
		
		Let $\tau_n\to0^+$ and set
		\[
		v_n:=(1-\tau_n)u_0+\tau_nu.
		\]
		Then $v_n\in D(\mathcal A)$ and, since $v_n\to u_0\neq0$, one has
		$v_n\neq0$ for all large $n$. Set
		\[
		t_n:=t_{v_n},
		\qquad
		w_n:=S_{t_n}v_n\in\mathcal M.
		\]
		Since $u_0\in\mathcal M$ and $S_1u_0=u_0$, the uniqueness in
		\ref{A1} gives $t_{u_0}=1$. Therefore, by \ref{A2},
		\[
		t_n\to1.
		\]
		
		Using the minimality of $u_0$ on $\mathcal M$, the convexity of
		$\mathcal A$ and the differentiability of $\mathcal B$, we get
		\begin{equation}\label{basic Szulkin estimate}
			\begin{aligned}
				\mathcal I(v_n)-\mathcal I(w_n)
				&\leq \mathcal I(v_n)-\mathcal I(u_0)\\
				&\leq
				\tau_n\bigl(\mathcal A(u)-\mathcal A(u_0)\bigr)
				-\tau_n\mathcal B'(u_0)(u-u_0)+o(\tau_n).
			\end{aligned}
		\end{equation}
		By the mean value theorem, there exists $\xi_n$ between $t_n$ and
		$1$ such that
		\[
		\mathcal I(v_n)-\mathcal I(w_n)
		=(1-t_n)\gamma_{v_n}'(\xi_n).
		\]
		Hence, dividing \eqref{basic Szulkin estimate} by $\tau_n$, we obtain
		\begin{equation}\label{fiber Szulkin estimate}
			\frac{1-t_n}{\tau_n}\gamma_{v_n}'(\xi_n)
			\leq
			\mathcal A(u)-\mathcal A(u_0)
			-\mathcal B'(u_0)(u-u_0)+o(1).
		\end{equation}
		
		By the lower semicontinuity and convexity of $\mathcal A$,
		\[
		\mathcal A(u_0)
		\leq\liminf_{n\to\infty}\mathcal A(v_n)
		\leq\limsup_{n\to\infty}\mathcal A(v_n)
		\leq\mathcal A(u_0).
		\]
		Thus,
		\[
		\mathcal A(v_n)\to\mathcal A(u_0).
		\]
		Since $\xi_n\to1$ and $\mathcal R$ is continuous by \ref{Q1}, we deduce that
		\begin{equation}\label{vanishing fiber derivative}
			\gamma_{v_n}'(\xi_n)
			=
			\alpha'(\xi_n)\mathcal A(v_n)
			-\mathcal R(\xi_n,v_n)
			\to\mathcal P(u_0)=0.
		\end{equation}
		
		Set by \ref{A3}
		\[
		c_0:=\partial_t\mathcal G(1,u_0)>0.
		\]
		Since $t_n$ is an interior maximum point of $\gamma_{v_n}$,
		\[
		0=\gamma_{v_n}'(t_n)
		=\alpha'(t_n)\mathcal A(v_n)-\mathcal R(t_n,v_n),
		\]
		and hence
		\begin{equation}\label{fiber critical identity}
			\mathcal A(v_n)=\mathcal G(t_n,v_n).
		\end{equation}
		
		We first prove that
		\begin{equation}\label{upper fiber ratio}
			\limsup_{n\to\infty}\frac{1-t_n}{\tau_n}<+\infty.
		\end{equation}
		Suppose by contradiction that, along a subsequence,
		\[
		\frac{1-t_n}{\tau_n}\to+\infty.
		\]
		Then $t_n<1$ for all large $n$ and
		\[
		\frac{\tau_n}{1-t_n}\to0.
		\]
		Define
		\[
		\mathcal E(t,u):=\alpha(t)\mathcal G(t,u)-\mathcal Q(t,u).
		\]
		By \eqref{fiber critical identity},
		\[
		\mathcal I(w_n)=\mathcal E(t_n,v_n),
		\qquad
		\mathcal I(u_0)=\mathcal E(1,u_0).
		\]
		Moreover, \ref{Q2} implies
		\[
		\mathcal E(t_n,v_n)-\mathcal E(t_n,u_0)=O(\tau_n).
		\]
		On the other hand, since
		$\partial_t\mathcal Q=\mathcal R=\alpha'\mathcal G$,
		\[
		\partial_t\mathcal E(t,u_0)
		=
		\alpha(t)\partial_t\mathcal G(t,u_0).
		\]
		Therefore,
		\[
		0\leq\mathcal I(w_n)-\mathcal I(u_0)
		=(t_n-1)\alpha(1)c_0
		+o(|t_n-1|)+O(\tau_n).
		\]
		Dividing by $t_n-1<0$ and passing to the limit, we obtain
		\[
		0\geq\alpha(1)c_0,
		\]
		which is a contradiction. Hence \eqref{upper fiber ratio} holds.
		
		If $\{(1-t_n)/\tau_n\}$ is bounded, then
		\eqref{fiber Szulkin estimate} and
		\eqref{vanishing fiber derivative} yield
		\[
		\mathcal A(u)-\mathcal A(u_0)
		-\mathcal B'(u_0)(u-u_0)\geq0.
		\]
		It remains to exclude the unbounded from below case. In view of
		\eqref{upper fiber ratio}, after passing to a subsequence, we may
		assume that
		\begin{equation}\label{negative fiber ratio}
			t_n>1,
			\qquad
			\frac{t_n-1}{\tau_n}\to+\infty.
		\end{equation}
		
		Assume first that, along a further subsequence,
		$\mathcal A(v_n)\leq\mathcal A(u_0)$. By
		\eqref{fiber critical identity},
		\[
		\mathcal G(t_n,v_n)\leq \mathcal G(1,u_0).
		\]
		By \ref{Q2} and the continuity of $\partial_t\mathcal G$, we obtain
		\[
		0\geq
		\mathcal G(t_n,v_n)-\mathcal G(1,u_0)
		=(t_n-1)c_0+o(|t_n-1|)+O(\tau_n).
		\]
		Dividing by $t_n-1$ and using \eqref{negative fiber ratio}, we get
		$0\geq c_0$, which contradicts $c_0>0$.
		
		Thus, after passing to a subsequence, we may assume that
		$\mathcal A(v_n)>\mathcal A(u_0)$. By the convexity of
		$\mathcal A$ and \eqref{fiber critical identity},
		\[
		0<
		\frac{\mathcal A(v_n)-\mathcal A(u_0)}{\tau_n}
		=
		\frac{\mathcal G(t_n,v_n)-\mathcal G(1,u_0)}{\tau_n}
		\leq\mathcal A(u)-\mathcal A(u_0).
		\]
		Moreover,
		\[
		\frac{\mathcal G(t_n,v_n)-\mathcal G(1,u_0)}{\tau_n}
		=
		\frac{t_n-1}{\tau_n}\bigl(c_0+o(1)\bigr)+O(1).
		\]
		This contradicts \eqref{negative fiber ratio}. Therefore,
		$\{(1-t_n)/\tau_n\}$ is bounded, and the conclusion follows from
		\eqref{fiber Szulkin estimate} and
		\eqref{vanishing fiber derivative}. Thus, we complete the proof.
	\end{proof}

	\begin{lemma}\label{A.1}
		Let $u_0\in\cN$ satisfying $\cI(u_0)=c$. Then $u_0$ is a critical point of $\cI$.
	\end{lemma}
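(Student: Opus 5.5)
We apply Theorem~\ref{abstract minimizer lemma} with $X$ the energy space, $\mathcal A=\Psi$, $\mathcal B=\Phi$ and the amplitude scaling $S_tu=tu$. The key identity is
\[
\cI(tu)=t^2\cI(u)-t^2\log t\,\lVert u\rVert_2^2 .
\]
Since $\Psi$ is not $2$-homogeneous, we cannot take $\alpha(t)\Psi(u)$ to be the exact $\Psi$-part of $\cI(tu)$. Instead we write
\[
\cI(tu)=t^2\Psi(u)-\bigl(t^2\Phi(u)+t^2\log t\,\lVert u\rVert_2^2\bigr),
\]
so that $\alpha(t)=t^2$ and $\mathcal Q(t,u)=t^2\Phi(u)+t^2\log t\,\lVert u\rVert_2^2$. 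This $\mathcal Q$ is defined on all of $(0,\infty)\times X$ and is finite there. Then
\[
\alpha(1)=1>0,\qquad \alpha'(t)=2t>0,\qquad \mathcal R(t,u)=2t\Phi(u)+(2t\log t+t)\lVert u\rVert_2^2,
\]
and
\[
\mathcal G(t,u)=\Phi(u)+\bigl(\log t+\tfrac12\bigr)\lVert u\rVert_2^2 .
\]
With these choices, $\mathcal P(u)=2\Psi(u)-2\Phi(u)-\lVert u\rVert_2^2=2\cI(u)-\lVert u\rVert_2^2=\cJ(u)$. Hence $\mathcal M=\cN$ and $u_0$ is a minimizer of $\cI$ on $\mathcal M$.

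The hypotheses are checked in order. \ref{Q1} holds because $\Phi\in\cC^1$ and $u\mapsto\lVert u\rVert_2^2$ is continuous, so $\mathcal R$ is continuous. Moreover $\partial_t\mathcal G=\lVert u\rVert_2^2/t$ is continuous. \ref{Q2} follows since $\Phi$ is $\cC^1$, hence locally Lipschitz, and $\lVert\cdot\rVert_2^2$ is locally Lipschitz on $X$ via $X\hookrightarrow L^2$; the coefficients $t^2$ and $t^2\log t$ are bounded near any $t_*>0$. \ref{A1} is exactly Lemma~\ref{projection}. \ref{A3} holds because $\partial_t\mathcal G(1,u_0)=\lVert u_0\rVert_2^2>0$.

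The genuinely nontrivial hypothesis is \ref{A2}. Fix $u\in D(\cI)$ and set $v_\tau=(1-\tau)u_0+\tau u$. Convexity of $D(\Psi)$ gives $v_\tau\in D(\cI)$, and $v_\tau\to u_0\neq0$ in $X$, so $v_\tau\ne0$ for small $\tau$. By Lemma~\ref{projection}, it suffices to show $\Psi(v_\tau)\to\Psi(u_0)$. This follows from convexity and lower semicontinuity:
\[
\Psi(u_0)\le\liminf_{\tau\to0^+}\Psi(v_\tau)\le\limsup_{\tau\to0^+}\bigl((1-\tau)\Psi(u_0)+\tau\Psi(u)\bigr)=\Psi(u_0).
\]
Alternatively, one can use the explicit formula \eqref{projection explicit formula} directly, since $\cI(v_\tau)\to\cI(u_0)$ and $\lVert v_\tau\rVert_2\to\lVert u_0\rVert_2$. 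Either way $t_{v_\tau}\to t_{u_0}=1$.

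Theorem~\ref{abstract minimizer lemma} then yields $\Psi(v)-\Psi(u_0)-\Phi'(u_0)(v-u_0)\ge0$ for all $v\in X$, i.e. $u_0$ is a critical point of $\cI$. The main point requiring care is the bookkeeping that puts the non-homogeneous logarithmic correction, and the smooth part $\Phi$, entirely into $\mathcal Q$ so that $\alpha(t)=t^2$ multiplies the convex part exactly. One should also check that \ref{Q2} is required for all $u\in X$, not just on $D(\Psi)$; this is fine since $\mathcal Q$ involves only $\Phi$ and the $L^2$ norm.
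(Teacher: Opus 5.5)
Your proposal is correct and follows the paper's proof. The paper uses the same data: $\alpha(t)=t^2$ and $\mathcal Q(t,u)=t^2\irn F_2(u)\,dx+\tfrac12 t^2\log t^2\lVert u\rVert_2^2-\tfrac12 t^2\lVert u\rVert_X^2$, which is exactly your $t^2\Phi(u)+t^2\log t\,\lVert u\rVert_2^2$. It also identifies $\mathcal M=\cN$ and verifies \ref{A1}--\ref{A3} the same way, with \ref{A2} coming from $\Psi(v_\tau)\to\Psi(u_0)$ (convexity plus lower semicontinuity) and the explicit formula for $t_u$.
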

	\begin{proof}
		We apply Theorem \ref{abstract minimizer lemma} with
		$\mathcal A=\Psi$, $\mathcal B=\Phi$, $\mathcal I=\cI$ and
		$S_tu=tu$. Obviously, $D(\Psi)$ is convex. It is easy to check that
		$S_1u=u$.
		For
		$u\in D(\Psi)\setminus\{0\}$,
		\begin{equation*}
			\begin{aligned}
				\gamma_u(t):=\cI(S_tu)
				=\alpha(t)\Psi(u)-\cQ(t,u),
			\end{aligned}
		\end{equation*}
		where \begin{equation*}
			\alpha(t):=t^2,\quad \cQ(t,u):=t^2\irn F_2(u)dx+\dfrac{1}{2}t^2\log t^2\|u\|_2^2-\dfrac{1}{2}t^2\|u\|_X^2.
		\end{equation*}
		Then
		\begin{equation*}
			\cR(t,u):=\dfrac{d}{dt}\cQ(t,u)
			=2t\irn F_2(u)dx+t(\log t^2+1)\lVert u\rVert_2^2
			-t\lVert u\rVert_X^2
		\end{equation*}
		and
		\begin{equation*}
			\cG(t,u):=\frac{\cR(t,u)}{\alpha'(t)}
			=\irn F_2(u)dx+\frac12(\log t^2+1)\lVert u\rVert_2^2
			-\frac12\lVert u\rVert_X^2.
		\end{equation*}
		By \eqref{F2primegrowth} and the embedding $X\hookrightarrow L^p(\RN)$,
		the map $u\mapsto\irn F_2(u)dx$ is of class $\cC^1$ on $X$. Hence
		$\cQ$ and $\cG$ are locally Lipschitz continuous with respect to $u$,
		locally uniformly in $t$, and the remaining regularity assumptions
		\ref{Q1}--\ref{Q2} are immediate.
		
		Note that \begin{equation*}
			\dfrac{d}{dt}\gamma_u(t)\Big|_{t=1}= \cJ(u).
		\end{equation*}
		Thus the abstract constraint $\mathcal M$ coincides with $\cN$, and
		\ref{A1} follows from Lemma \ref{projection}. To verify \ref{A2}, let
		$u\in D(\Psi)$ and $v_\tau=(1-\tau)u_0+\tau u$. Then
		$v_\tau\in D(\Psi)$ and, for $\tau>0$ small, $v_\tau\neq0$. By the
		lower semicontinuity and convexity of $\Psi$,
		$\Psi(v_\tau)\to\Psi(u_0)$. Since $\Phi(v_\tau)\to\Phi(u_0)$ and
		$\lVert v_\tau\rVert_2^2\to\lVert u_0\rVert_2^2>0$, the explicit
		formula \eqref{projection explicit formula} yields
		$t_{v_\tau}\to t_{u_0}=1$.
		Besides,
		\begin{equation*}
			\partial_t\cG(t,u)=\frac1t\lVert u\rVert_2^2,
			\qquad
			\partial_t\cG(1,u_0)=\lVert u_0\rVert_2^2>0,
		\end{equation*}
		which implies that \ref{A3} holds. Now all assumptions of Theorem
		\ref{abstract minimizer lemma} are satisfied, then it holds that
		\begin{equation*}
			\Psi(u)-\Psi(u_0)-\left\langle\Phi^\prime(u_0),u-u_0\right\rangle\geq 0, \quad {\rm for~}u\in X.
		\end{equation*}
		Therefore, $u_0$ is a critical point of $\cI$.
	\end{proof}

	\begin{lemma}\label{mountainpass}
		Let
		\begin{equation*}
			\Gamma:=\left\{\gamma\in \mathcal{C}([0,1],X):
		 \gamma(0)=0,
			\ \cI(\gamma(1))<0\right\}
		\end{equation*}
		and
		\begin{equation*}
			m:=\inf_{\gamma\in\Gamma}\sup_{t\in[0,1]}\cI(\gamma(t)).
		\end{equation*}
		The supremum is taken in $\R\cup\{+\infty\}$; continuity
		of $\cI\circ\gamma$ is not imposed.
		Then $c=m>0$.
	\end{lemma}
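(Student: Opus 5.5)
We prove $c\le m$, then $m\le c$, and finally $c>0$. Two facts are used throughout. First, the fibering identity $\cI(tu)=t^2\cI(u)-t^2\log t\,\lVert u\rVert_2^2$ from Lemma \ref{projection}. Second, the identity $\cI(u)=\frac12\lVert u\rVert_2^2$ on $\cN$, see \eqref{energy identity}.

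\emph{Step 1: $m\le c$.} Take $u\in\cN$ and set $\gamma(t):=tTu$ for a large $T>1$. The map $s\mapsto su$ is continuous into $X$ and $\gamma(0)=0$. By the fibering identity, $\cI(Tu)=T^2\cI(u)-T^2\log T\lVert u\rVert_2^2<0$ once $T$ is large, so $\gamma\in\Gamma$. Lemma \ref{projection} gives $t_u=1$ and $\sup_{s>0}\cI(su)=\cI(u)$. At $s=0$ we have $\cI(0)=0\le\cI(u)$. Hence $\sup_{[0,1]}\cI\circ\gamma=\cI(u)$, and taking the infimum over $u\in\cN$ gives $m\le c$.

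\emph{Step 2: $c\le m$.} This is the main obstacle, because $\cI\circ\gamma$ need not be continuous and $\cI$ may equal $+\infty$ along $\gamma$. I would avoid a sign-change argument for $\cJ(\gamma(t))$ and use instead the quantity $\lVert u\rVert_2^2$, which is continuous along $\gamma$.

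Let $\gamma\in\Gamma$ and assume $\sup\cI\circ\gamma<\infty$; otherwise there is nothing to prove. Then $\gamma(t)\in D(\cI)$ for every $t$. For any $u\in D(\cI)\setminus\{0\}$, formula \eqref{projection explicit formula} gives
\[
\max_{s>0}\cI(su)=\frac12 t_u^2\lVert u\rVert_2^2\ge c .
\]
Also $\cI(u)<0$ is equivalent to $t_u<e^{-1/2}$. Moreover $\cI(u)\ge \frac12\lVert u\rVert_2^2$ holds exactly when $t_u\ge1$.

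I would then derive a lower bound $\cI(u)\ge \frac12\lVert u\rVert_2^2$ whenever $\lVert u\rVert_2$ is small, in the sense that such $u$ lie on the increasing part of their fiber. To get it, apply \eqref{logsobolev inequality} with $a$ chosen so that $a^2/\pi\le1$. This gives
\[
\cI(u)\ge\frac12\lVert u\rVert_2^2\Bigl(1+N(1+\log a)-\log\lVert u\rVert_2^2\Bigr),
\]
and the bracket is at least $1$ once $\lVert u\rVert_2^2\le e^{N(1+\log a)}=:r_0$.

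Consider now the continuous function $t\mapsto\lVert\gamma(t)\rVert_2^2$. At the endpoint, $\cI(\gamma(1))<0$ forces $\lVert\gamma(1)\rVert_2^2>r_0$, by the bound just derived.

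This alone does not yet give $c$. The cleaner route is to compare with the Nehari mass level: every $w\in\cN$ satisfies $\lVert w\rVert_2^2=2\cI(w)\ge 2c$.

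I would therefore use a continuity argument on $\Lambda(t):=\log\bigl(t_{\gamma(t)}\bigr)$, which by \eqref{projection explicit formula} equals $\cI(\gamma(t))/\lVert\gamma(t)\rVert_2^2-\frac12$. Its lower semicontinuity follows from that of $\Psi$ together with the continuity of $\Phi$ and of $\lVert\cdot\rVert_2$. Near $t=0$ we have $\Lambda\ge\frac12$ by the small-mass bound. At $t=1$ we have $\Lambda(1)<-\frac12$. Let $t_*:=\sup\{t:\Lambda\ge0\text{ on }(0,t]\}$.

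At $t_*$ lower semicontinuity gives $\Lambda(t_*)\ge0$, i.e.\ $t_{\gamma(t_*)}\ge1$, but points with $\Lambda<0$ accumulate at $t_*$. I expect lower semicontinuity to be exactly what is needed here. It yields $\cI(\gamma(t_*))\ge\frac12\lVert\gamma(t_*)\rVert_2^2$. Along points $t_k\downarrow t_*$ with $t_{\gamma(t_k)}<1$ we get
\[
\cI(\gamma(t_k))\ge \cI\bigl(t_{\gamma(t_k)}\gamma(t_k)\bigr)\cdot\min\{\ldots\}.
\]
More directly, one has $\cI(su)\le\cI(u)$ for $s\in(t_u,1]$, and $\cI(u)\ge\frac12 t_u^2\lVert u\rVert_2^2\cdot h(t_u)$ with $h(\tau)=\tau^{-2}(1+2\log\tau^{-1})\ge1$ for $\tau\le1$. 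Hence $\sup\cI\circ\gamma\ge\limsup_k \frac12 t_{\gamma(t_k)}^2\lVert\gamma(t_k)\rVert_2^2\ge c$.

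\emph{Step 3: $c>0$.} This follows from the small-mass bound: if $w\in\cN$, then $t_w=1$ and the bound forces $\lVert w\rVert_2^2>0$ uniformly. Indeed, for $w\in\cN$ the bracket above must be $\le1$, so $\lVert w\rVert_2^2\ge r_0$ and $c\ge r_0/2>0$.
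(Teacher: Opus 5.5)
\textbf{Steps 1 and 3 are correct; Step 2 has a genuine gap.} Your direct bound $c\ge r_0/2$ in Step 3 is a valid alternative to the paper's route. The paper instead proves $m\ge\eta>0$ from the estimate $\cI(u)\ge\frac12\lVert u\rVert_X^2-C\lVert u\rVert_X^p$ on a small $X$-sphere. Your version comes straight from the logarithmic Sobolev inequality and is more quantitative: with $a=\sqrt\pi$ it gives $c\ge\frac12\pi^{N/2}e^N$.

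The gap is in Step 2, in the passage from $t_*$ to $\sup\cI\circ\gamma\ge c$. Two things go wrong.

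First, you use lower semicontinuity in the wrong direction. It makes the sublevel sets $\{\Lambda\le\alpha\}$ closed. So knowing $\Lambda\ge0$ on $(0,t_*)$ gives no lower bound on $\Lambda(t_*)$. If negative values accumulate at $t_*$, you only get $\Lambda(t_*)\le0$. Since $\Psi$ is not continuous on $D(\cI)$ in the $X$-topology, $\Lambda$ can jump down, and $\Lambda(t_*)<0$ is possible (for instance $t_*=1$). Hence $\cI(\gamma(t_*))\ge\frac12\lVert\gamma(t_*)\rVert_2^2$ is unjustified.

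Second, the lower bound at the points $t_k$ is false. If $t_u<1$, then $s=1$ lies on the strictly decreasing part of the fiber. So $\cI(su)\ge\cI(u)$, not $\le$, for $s\in(t_u,1]$. Moreover $\cI(u)=\frac12(1+2\log t_u)\lVert u\rVert_2^2<\frac12t_u^2\lVert u\rVert_2^2$. Your inequality $\cI(u)\ge\frac12t_u^2\lVert u\rVert_2^2h(t_u)$ reduces to $1+2\log t_u\ge1-2\log t_u$, that is, $t_u\ge1$. At points where $t_{\gamma(t_k)}<1$, the value $\cI(\gamma(t_k))$ can lie far below $c$; it is negative at $t=1$. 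So evaluating $\cI$ there cannot bound $\sup\cI\circ\gamma$ from below.

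\textbf{The fix.} Evaluate $\cI$ on the other side of $t_*$, and carry the information across $t_*$ through the mass, which is continuous along $\gamma$.
\begin{itemize}
\item For $t<t_*$ you have $\Lambda(t)\ge0$, i.e.\ $\cI(\gamma(t))\ge\frac12\lVert\gamma(t)\rVert_2^2$ (trivially also where $\gamma(t)=0$). Letting $t\uparrow t_*$ gives $\sup\cI\circ\gamma\ge\frac12\lVert\gamma(t_*)\rVert_2^2$.
\item By definition of $t_*$ there are $t_k\ge t_*$ with $t_k\to t_*$ and $\Lambda(t_k)<0$. These satisfy $c\le\cI(t_{\gamma(t_k)}\gamma(t_k))=\frac12t_{\gamma(t_k)}^2\lVert\gamma(t_k)\rVert_2^2<\frac12\lVert\gamma(t_k)\rVert_2^2$, and the right-hand side tends to $\frac12\lVert\gamma(t_*)\rVert_2^2$.
\end{itemize}
Together these give $\sup\cI\circ\gamma\ge c$.

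This is exactly the paper's first-entry argument. There $t_1=\inf\{t:\gamma(t)\in D(\cI)\setminus\{0\},\ \cJ(\gamma(t))\le0\}$, and note that $\Lambda\le0$ if and only if $\cJ\le0$. The paper uses lower semicontinuity only in the correct direction, to show $t_1$ belongs to this closed set, and then lets $t\uparrow t_1$ using $\cI(\gamma(t))>\frac12\lVert\gamma(t)\rVert_2^2$ before first entry. With this repair your Step 2 goes through, with your small-$L^2$-mass bound replacing the paper's small-$X$-norm bound for the initial segment. Lower semicontinuity of $\Lambda$ is then not even needed.
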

	\begin{proof}
		For every $u\in D(\cI)\setminus\{0\}$ and $t>0$, one has
		\begin{equation}\label{fiber identity}
			\cI(tu)=t^2\cI(u)-t^2\log t\,\lVert u\rVert_2^2.
		\end{equation}
		Hence $\cI(tu)\to0$ as $t\to0^+$ and $\cI(tu)\to-\infty$ as $t\to+\infty$. Thus $\Gamma\ne\emptyset$.
		
		We first prove that $m>0$. Since $\Psi\ge0$ and \eqref{F2primegrowth} implies $F_2(s)\le C_p|s|^p$ for some $p\in(2,2^*)$, it follows that
		\begin{equation*}
			\cI(u)\ge \frac12\lVert u\rVert_X^2-C\lVert u\rVert_X^p,\qquad u\in X.
		\end{equation*}
		Thus there exists $\rho>0$ such that
		\[
		\cI(u)>0,\qquad 0<\lVert u\rVert_X\leq\rho.
		\]
		Moreover, by $sF_1'(s)\geq0$ and \eqref{F2primegrowth},
		\[
		\cJ(u)=\lVert u\rVert_X^2+\irn F_1'(u)u\,dx
		-\irn F_2'(u)u\,dx
		\geq\lVert u\rVert_X^2-C\lVert u\rVert_X^p.
		\]
		After decreasing $\rho$ if necessary, we also have
		$\cJ(u)>0$ for every $u\in D(\cI)$ with
		$0<\lVert u\rVert_X\leq\rho$.
		Consequently, there exists $\eta>0$ such that
		\begin{equation}\label{positive sphere}
			\cI(u)\ge\eta>0,\qquad \lVert u\rVert_X=\rho.
		\end{equation}
		If $\gamma\in\Gamma$, then $\cI(\gamma(1))<0$, so $\lVert\gamma(1)\rVert_X>\rho$. Since $\gamma(0)=0$, there exists $t_\gamma\in(0,1)$ such that $\lVert\gamma(t_\gamma)\rVert_X=\rho$. By \eqref{positive sphere},
		\[
	\sup_{t\in[0,1]}\cI(\gamma(t))\ge \eta.
		\]
		Hence $m\ge\eta>0$.
		
		Next, we prove $m\le c$. Let $u\in\cN$. By \eqref{fiber identity}, we can choose $T>1$ such that $\cI(Tu)<0$. Set $\gamma(t)=tTu$. Then $\gamma\in\Gamma$. Since $u\in\cN$, Lemma \ref{projection} gives
		\[
	\sup_{t\in[0,1]}\cI(\gamma(t))
		=\max_{s\in[0,T]}\cI(su)=\cI(u).
		\]
		Therefore $m\le \cI(u)$. Taking the infimum over $u\in\cN$, we obtain $m\le c$.
		
		It remains to prove $c\le m$. Let $\gamma\in\Gamma$ and set
		\begin{equation*}
			T:=\left\{t\in[0,1]:\gamma(t)\in D(\cI)\setminus\{0\},\ \cJ(\gamma(t))\leq0\right\},
			\qquad t_1:=\inf T .
		\end{equation*}
		Since $\cJ(\gamma(1))=2\cI(\gamma(1))-\lVert\gamma(1)\rVert_2^2<0$, we have $1\in T$. Choosing $\rho>0$ as above, every $t\in T$ satisfies $\lVert\gamma(t)\rVert_X>\rho$, so that $t_1>0$ and, by continuity, $\lVert\gamma(t_1)\rVert_X\geq\rho$; in particular $\gamma(t_1)\neq0$.
		
		We claim that $t_1\in T$. Let $t_k\in T$ with $t_k\to t_1$. Since $\cJ(\gamma(t_k))\leq0$, one has $\cI(\gamma(t_k))\leq\frac12\lVert\gamma(t_k)\rVert_2^2$, and the lower semicontinuity of $\cI$ on $X$ together with the continuity of $\gamma$ and of $\lVert\cdot\rVert_2$ on $X$ gives
		\begin{equation*}
			\cI(\gamma(t_1))\leq\liminf_{k\to\infty}\cI(\gamma(t_k))\leq\frac12\lVert\gamma(t_1)\rVert_2^2<+\infty .
		\end{equation*}
		Hence $\gamma(t_1)\in D(\cI)\setminus\{0\}$ and $\cJ(\gamma(t_1))=2\cI(\gamma(t_1))-\lVert\gamma(t_1)\rVert_2^2\leq0$, that is, $t_1\in T$.
		
		If $\cJ(\gamma(t_1))=0$, then $\gamma(t_1)\in\cN$ and $\sup_{t\in[0,1]}\cI(\gamma(t))\geq\cI(\gamma(t_1))\geq c$. If $\cJ(\gamma(t_1))<0$, then $t_{\gamma(t_1)}<1$ by Lemma \ref{projection}, and by \eqref{energy identity}
		\begin{equation}\label{ge minimum}
			c\leq\cI(t_{\gamma(t_1)}\gamma(t_1))=\frac{t_{\gamma(t_1)}^2}{2}\lVert \gamma(t_1)\rVert_2^2<\frac12\lVert \gamma(t_1)\rVert_2^2 .
		\end{equation}
		On the other hand, for $t<t_1$ close to $t_1$ one has $\gamma(t)\neq0$ and $t\notin T$, so that either $\gamma(t)\notin D(\cI)$, in which case $\cI(\gamma(t))=+\infty$, or $\cJ(\gamma(t))>0$, in which case $\cI(\gamma(t))>\frac12\lVert\gamma(t)\rVert_2^2$. Letting $t\uparrow t_1$, we get from \eqref{ge minimum} that
		\begin{equation*}
			\sup_{t\in[0,1]}\cI(\gamma(t))\geq \liminf\limits_{t\uparrow t_1}\cI(\gamma(t))\geq \liminf\limits_{t\uparrow t_1}\frac12\lVert\gamma(t)\rVert_2^2=\frac12\lVert \gamma(t_1)\rVert_2^2\geq c .
		\end{equation*}
		Taking the infimum over $\gamma\in\Gamma$ yields $m\geq c$. Therefore $c=m>0$.
	\end{proof}
\begin{remark}\label{mountain pass class remark}
		\begin{enumerate}[label={\rm (\roman*)}]
			\item
			In \cite[Lemma 3.3(b)]{AlvesMoraisFilho2018}, the corresponding  result is established via a Palais--Smale sequence, which is obtained from Mountain Pass Theorem without Palais--Smale condition.
			 Here, we provide an alternative proof that does not rely on this sequence.
	\item
	The proof of $m\geq c$ differs from that of
	\cite[Lemma 4.3]{MederskiZeng2026}. In that setting, continuity of the
	convex part on its effective domain makes the Poho\v zaev functional
	continuous along admissible paths, so the intermediate value theorem
	applies. Here this continuity fails when $d<N$.
	Indeed, put $m_0=N-d\geq1$ and choose smooth compactly supported
	functions $\varphi$ on $\R^d$ and $\psi$ on $\R^{m_0}$ with
	$\lVert\varphi\rVert_2=\lVert\psi\rVert_2=1$; omit $\varphi$ if $d=0$.
	For $R>1$, set
	\[
	v_R(y,z)=(R^{m_0}\log R)^{-1/2}\varphi(y)\psi(z/R).
	\]
	Then $v_R\to0$ in $X$, whereas
	$\Psi(v_R)\to m_0/2$. Since $\Phi$ is continuous and
	$\cJ=2\cI-\lVert\cdot\rVert_2^2$, neither $\cI$ nor $\cJ$ is
	continuous on $D(\cI)$ with the $X$-topology. Thus a continuous path
	in $X$ need not have continuous energy, which is why we use a
	supremum in the definition of $m$. In contrast, when $d=N$, the
	weighted embeddings in Section~2 make these functionals continuous.
	The first-entry argument above applies to both cases.
\end{enumerate}
\end{remark}

		\begin{lemma}\label{F2BrezisLieb}
		Let $(u_n)$ be a bounded sequence in $X$ such that $u_n\to u$ a.e. on $\RN$. Then one has
		\begin{equation*}
			\lim_{n\to\infty}\irn \left[F_2(u_n)-F_2(u_n-u)\right]dx=\irn F_2(u)dx
		\end{equation*} 
		and
		\begin{equation*}
			\lim_{n\to\infty}\irn \left[F_2^\prime(u_n)u_n-F_2^\prime(u_n-u)(u_n-u)\right]dx=\irn F_2^\prime(u)udx.
		\end{equation*} 
	\end{lemma}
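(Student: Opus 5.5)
This is a Brezis--Lieb type splitting for $F_2$. I would prove it with the Brezis--Lieb lemma in the form used by Brezis and Lieb themselves (see also Lieb--Loss). That form says: if $j\in\cC(\R)$, $j(0)=0$, and for every $\varepsilon>0$ there is $C_\varepsilon$ with
\[
|j(a+b)-j(a)|\leq\varepsilon\phi(a)+C_\varepsilon\psi(b),
\]
then the splitting holds. Here $\phi,\psi$ are nonnegative, $\int\phi(f_n)$ is uniformly bounded and $\int\psi(f)<\infty$, where $f_n:=u_n-u\to0$ a.e. I apply it twice, with $j=F_2$ and with $j(s)=F_2'(s)s$, taking $a=u_n-u$ and $b=u$.

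\emph{Integrability and local bounds.} Fix $p\in(2,2^*)$. Because $F_2(s)=0$ and $F_2'(s)=0$ for $|s|<\delta$, the bound \eqref{F2primegrowth} gives
\[
|F_2(s)|\leq C|s|^p,\qquad |F_2'(s)|\leq C|s|^{p-1},\qquad |(F_2'(s)s)'|\leq C'|s|^{p-1}.
\]
For the last bound I compute $F_2''(s)=\log(s^2/\delta^2)+2\delta/|s|-2$ for $|s|>\delta$. This is bounded by $C(1+|s|^{\varepsilon})\leq C|s|^{p-2}$ for $|s|\geq\delta$, so $(F_2'(s)s)'=F_2''(s)s+F_2'(s)$ is $O(|s|^{p-1})$. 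Near $|s|=\delta$ the function $F_2'$ is only Lipschitz, but that suffices for the mean value estimate.

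\emph{The $\varepsilon$-inequality.} For $j=F_2$ or $j(s)=F_2'(s)s$, the mean value theorem gives
\[
|j(a+b)-j(a)|\leq C\bigl(|a|+|b|\bigr)^{p-1}|b|\leq C_p\bigl(|a|^{p-1}|b|+|b|^p\bigr).
\]
Young's inequality then yields
\[
|j(a+b)-j(a)|\leq\varepsilon|a|^p+C_\varepsilon|b|^p .
\]
So I take $\phi=\psi=|\cdot|^p$. Since $(u_n)$ is bounded in $X\hookrightarrow L^p$, the sequence $(u_n-u)$ is bounded in $L^p$. By Fatou's lemma $u\in X$, so $u\in L^p$.

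\emph{The Brezis--Lieb step.} Set
\[
g_n:=\Bigl(|j(u_n)-j(u_n-u)-j(u)|-\varepsilon|u_n-u|^p\Bigr)^+ .
\]
Then $g_n\leq |j(u)|+C_\varepsilon|u|^p\in L^1$, and $g_n\to0$ a.e. because $j$ is continuous with $j(0)=0$ and $u_n-u\to0$ a.e. Dominated convergence gives $\int g_n\to0$. Hence
\[
\limsup_n\int|j(u_n)-j(u_n-u)-j(u)|\leq\varepsilon\sup_n\lVert u_n-u\rVert_p^p .
\]
Letting $\varepsilon\to0$ gives both identities.

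The only step needing care is checking the derivative bound for $j(s)=F_2'(s)s$. The logarithmic growth of $F_2''$ is absorbed by $|s|^{p-2}$ with $p>2$, and $F_2$ vanishes near zero, so the bound holds with no trouble at the origin.
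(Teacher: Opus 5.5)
Your proposal is correct and is essentially the paper's argument: the paper just cites Theorem~2 and Example~(b) of Brezis--Lieb, and you have written out that theorem's proof with $\phi=\psi=|\cdot|^p$, for $j=F_2$ and for $j(s)=F_2'(s)s$. One harmless slip: for $|s|>\delta$ one has $F_2''(s)=\log(s^2/\delta^2)$ (the expression you wrote is $F_2'(s)/s$), so in fact $F_2\in\cC^2(\R)$, and your bound $|F_2''(s)|\leq C|s|^{p-2}$ for $|s|\geq\delta$ holds unchanged.
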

	\begin{proof}
	See Theorem 2 and Example (b) in \cite{BrezisLieb1983}.
	\end{proof}

	\begin{lemma}\label{profiledecomposition}
		Suppose that $(u_n)$ is a bounded sequence in $X$. Then there exist an index $k\in \mathbb{N}_0\cup\{+\infty\}$, where $\mathbb{N}_0:=\{0,1,2,\ldots\}$, sequences $(k_n^i)\subset\mathbb{Z}^{N-d}$ and functions $\tilde u_i\in X$ for $0\le i<k+1$\footnote{To avoid ambiguity, we make the convention that if $k=+\infty$, then $k+1=+\infty$.} with $k_n^0=0$ for every $n$ and $\tilde u_i\neq0$ for $1\leq i<k+1$, such that, up to a subsequence,
		\begin{align*}
			&u_n(y, z+k_n^i)\rightharpoonup \tilde u_i(y,z) \quad \text{\rm in }X ,\text{ as }n\to\infty; \\
			&\lim_{n\to\infty}|k_n^i-k_n^j|=+\infty \quad \text{for }i\ne j; \\
			&\liminf_{n\to\infty}\|\n u_n\|_2^2\geq\sum_{i=0}^k\|\n \tilde  u_i\|_2^2;\\
			&\liminf_{n\to\infty}\int_{\RN}W(y)u_n^2dx\geq\sum_{i=0}^k\int_{\RN}W(y)\tilde u_i^2dx;\\
			&\liminf_{n\to\infty}\int_{\RN}V(z)u_n^2dx\geq\sum_{i=0}^k\int_{\RN}V(z)\tilde u_i^2dx;\\
			&\liminf_{n\to\infty}\|u_n\|_2^2\geq\sum_{i=0}^k\|\tilde u_i\|_2^2;\\
			&\liminf_{n\to\infty}\Psi(u_n)\ge \sum_{i=0}^k \Psi(\tilde u_i)
			; \\
			&\lim_{n\to\infty}\irn F_2(u_n)dx=\sum_{i=0}^k \irn F_2(\tilde u_i)dx;\\
			&\lim_{n\to\infty}\irn F_2^\prime(u_n)u_ndx=\sum_{i=0}^k \irn F_2^\prime(\tilde u_i)\tilde u_idx.
		\end{align*}
	\end{lemma}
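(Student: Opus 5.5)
The proof will follow the standard iterative extraction of profiles, adapted to the symmetry group $\mathbb Z^{N-d}$ acting only in $z$. Set $k_n^0=0$ and let $\tilde u_0$ be the weak limit in $X$ of a subsequence of $(u_n)$. Inductively, given $k_n^0,\dots,k_n^{j}$ and $\tilde u_0,\dots,\tilde u_j$, put $r_n^j:=u_n-\sum_{i=0}^j\tilde u_i(\cdot,\cdot-k_n^i)$. I will then ask whether
\[
\sup_{\kappa\in\mathbb Z^{N-d}}\int_{\R^d\times(\kappa+[0,1]^{N-d})}|r_n^j|^2dx
\]
tends to zero. If it does, the process stops with $k=j$. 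Otherwise I choose $k_{n}^{j+1}$ with the mass of $r_n^j$ on the corresponding strip bounded below. Confinement in $y$ is what makes the strips $\R^d\times$(unit cube) usable: since $\int W r_n^2$ is bounded, the mass on $\{|y|>R\}$ is uniformly small. So local compactness of $H^1$ on $\{|y|\le R\}\times$cube produces a nonzero weak limit $\tilde u_{j+1}$ of $r_n^j(\cdot,\cdot+k_n^{j+1})$. Weak convergence $r_n^{j}(\cdot+k_n^i)\weakto0$ for $i\le j$ forces $|k_n^{j+1}-k_n^i|\to\infty$. A diagonal subsequence handles $k=+\infty$.

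For the quadratic liminf inequalities I use the Hilbert-space splitting. The terms $\|\nabla\cdot\|_2^2$, $\int W u^2$ and $\|\cdot\|_2^2$ are invariant under $z$-shifts. The term $\int V u^2$ is invariant under integer $z$-shifts by periodicity of $V$, and this is exactly why the shifts are restricted to $\mathbb Z^{N-d}$. Expanding $u_n=\sum_{i\le j}\tilde u_i(\cdot-k_n^i)+r_n^j$, the cross terms vanish by the weak convergences and the divergence of $|k_n^i-k_n^j|$. This gives, for each finite $j$,
\[
\liminf_n q(u_n)\ge\sum_{i=0}^j q(\tilde u_i)
\]
for each nonnegative quadratic form $q$ among these four; letting $j\to k$ finishes these inequalities. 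The same bound with $q=\|\cdot\|_X^2$ shows $\sum\|\tilde u_i\|_X^2<\infty$, which is needed later.

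For $\Psi$ I will not use Brezis--Lieb, since $F_1$ is only convex with log behaviour near $0$. Instead, for finite $J\le k$ and $R>0$, I localize. The sets $\Omega_n^i:=\{|z-k_n^i|<R\}$ are pairwise disjoint for large $n$ and $F_1\ge0$, so
\[
\Psi(u_n)\ge\sum_{i=0}^J\int_{\Omega_n^i}F_1(u_n)\,dx.
\]
After the shift, $u_n(\cdot,\cdot+k_n^i)\to\tilde u_i$ a.e. Here I use weak convergence in $X$, Rellich on bounded sets with a diagonal argument, and the fact that the other profiles and the remainder shifted by $k_n^i$ converge weakly to $0$, hence strongly in $L^2_{loc}$. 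Fatou then gives $\liminf\ge\sum_i\int_{|z|<R}F_1(\tilde u_i)$. Letting $R\to\infty$ and then $J\to k$ gives the claim.

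For the $F_2$ identities I apply Lemma~\ref{F2BrezisLieb} iteratively to the shifted remainders, which yields
\[
\int F_2(u_n)=\sum_{i\le j}\int F_2(\tilde u_i)+\int F_2(r_n^j)+o(1),
\]
and similarly for $F_2'(s)s$. The main obstacle is showing that the remainder terms $\lim_n\int F_2(r_n^j)$ become small as $j\to k$. If the process stops, the strip masses vanish. A Lions-type lemma on strips then gives $r_n^j\to0$ in $L^p$ for $2<p<2^*$: cover $\RN$ by $\R^d\times(\kappa+[0,1]^{N-d})$, use Gagliardo--Nirenberg on each strip together with the $y$-confinement for tail control, and sum. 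Since $|F_2(s)|+|F_2'(s)s|\le C|s|^p$ by \eqref{F2primegrowth}, the remainder terms vanish. If $k=\infty$, I choose $k_n^{j+1}$ to nearly maximize the strip mass. Then $\|\tilde u_{j+1}\|_2^2$ bounds $\limsup_n\sup_\kappa$ of the strip mass of $r_n^j$ from below up to a factor. Summability of $\sum\|\tilde u_i\|_X^2$ then forces $\limsup_n\sup_\kappa\int_{\text{strip}}|r_n^j|^2\to0$ as $j\to\infty$. Uniform boundedness of $r_n^j$ in $X$ comes from the splitting, and the same Lions argument gives $\limsup_n\|r_n^j\|_p\to0$. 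A diagonal argument then yields the two limit identities.
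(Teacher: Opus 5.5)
Your proposal is correct and follows essentially the same route as the paper. Both arguments extract profiles iteratively modulo $\mathbb{Z}^{N-d}$, with the $W$-confinement preventing loss of mass in $y$; both use localization on disjoint translated regions plus Fatou for $\Psi$; and both combine iterated Brezis--Lieb with a Lions-type estimate and the summability $\sum_i\|\tilde u_i\|_X^2<\infty$ to kill the remainder in the $F_2$ terms. The differences are cosmetic: you measure concentration on strips $\R^d\times(\kappa+[0,1]^{N-d})$ rather than on unit balls, where the paper instead shows the $y$-components of the centers stay bounded, and you get the quadratic lower bounds from the Hilbert-space splitting rather than from localization on disjoint translated balls.
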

	
	\begin{proof}
		Since $(u_n)_{n\in\mathbb{N}}$ is bounded in $X$, then, up to a subsequence, $u_n\rightharpoonup \tilde u_0$ weakly in $X$ and $u_n(x)\to \tilde u_0(x)$ a.e. on $\mathbb R^N$. The local Rellich theorem also gives $u_n\to\tilde u_0$ in $L^2_{\mathrm{loc}}(\RN)$. We put $k_n^0=0$ for every $n$.  Define $$r_n^0:=u_n-\tilde u_0$$ and
		\[
		\delta_0:=\limsup_{n\to\infty}\sup_{\xi\in\mathbb R^N}\int_{B_1(\xi)}|r_n^0|^2\,dx.
		\]
		At each extraction step, we first pass to a subsequence along which the corresponding concentration supremum converges to its limsup $\delta_i$. Subsequent subsequences retain that limit. When $d=N$, the compact embedding $X\hookrightarrow L^2(\RN)$ gives $\delta_0=0$, so only the following vanishing case is needed.
		If vanishing case occurs, i.e., $\delta_0=0$, it follows from Lions' vanishing Lemma \cite[Lemma 1.21]{Minimax} that $r_n^0\to 0$ in $L^p(\RN)$ with $p\in(2,2^*)$. 
		By \eqref{F2primegrowth}, both $F_2(r_n^0)$ and $F_2'(r_n^0)r_n^0$ tend to zero in $L^1(\RN)$. Lemma \ref{F2BrezisLieb} consequently gives
		\[
		 \int_{\RN}F_2(u_n)\,dx\longrightarrow\int_{\RN}F_2(\tilde u_0)\,dx,
		 \qquad
		 \int_{\RN}F_2'(u_n)u_n\,dx\longrightarrow\int_{\RN}F_2'(\tilde u_0)\tilde u_0\,dx.
		\]
		The remaining lower bounds follow from weak lower semicontinuity of $\Psi$ and of the four nonnegative quadratic forms in the statement.
		Thus, it is sufficient to take $k=0$ and $k_n^0=0$ for all $n\in\mathbb N$ to obtain the assertion.
		
		Suppose now that $\delta_0>0$, so $d<N$. By the local strong $L^2$ convergence, one may find a sequence $(\xi_n^1)=(y_n^1,z_n^1)\subset\R^d\times\R^{N-d}$ satisfying $|\xi_n^1|\to+\infty$ such that
		\begin{equation}\label{rn0}
			\int_{B_1(\xi_n^1)}|r_n^0|^2\,dx=\int_{B_1(y_n^1,z_n^1)}|r_n^0|^2\,dx\geq \dfrac{\delta_0}{2},\quad {\rm for\; large\; }n.    
		\end{equation}
		If $d=0$, there is no transverse variable and $|\xi_n^1|=|z_n^1|\to\infty$. If $1\leq d<N$, set $\lambda_*:=\min_{1\leq i\leq d}\lambda_i>0$. For $(y,z)\in B_1(y_n^1,z_n^1)$, the reverse triangle inequality and $|y|\geq0$ give
		\[
		 |y|\geq\max\{0,|y_n^1|-1\}=(|y_n^1|-1)_+.
		\]
		Since $(r_n^0)$ is bounded in $X$, it follows that
		\[
		 C\geq\|r_n^0\|_X^2
		 \geq\lambda_*\int_{B_1(y_n^1,z_n^1)}|y|^2|r_n^0|^2\,dx
		 \geq\frac{\lambda_*\delta_0}{2}(|y_n^1|-1)_+^2.
		\]
		Thus $(y_n^1)$ is bounded, and $|z_n^1|\to\infty$ also in this case.
		Now we decompose $z_n^1$ to the following form
		\begin{equation*}
			z_n^1=k_n^1+\theta_n^1,
			\qquad
			k_n^1\in\mathbb Z^{N-d},\quad
			\theta_n^1\in[0,1)^{N-d}, 
		\end{equation*}
		where $|k_n^1|\to+\infty$ as $n\to\infty$.
		From \eqref{rn0}, we have
		\begin{equation*}
			\int_{B_1(y_n^1,\theta_n^1)}|r_n^0(y,z+k_n^1)|^2\,dx\geq \dfrac{\delta_0}{2},\quad {\rm for\; large\; }n.
		\end{equation*}
		Notice that the sequence $(y_n^1)
		$ and $(\theta_n^1)
		$ are bounded uniformly for $n\in\N$. There exists $M_0>0$ such that
		$B_1(y_n^1,\theta_n^1)\subset B_{M_0}(0,0)$. Then we get
		\begin{equation*}
			\int_{B_{M_0}(0,0)}|r_n^0(y,z+k_n^1)|^2\,dx\rev{\geq} \dfrac{\delta_0}{2},\quad {\rm for\; large\; }n.
		\end{equation*}
		Thus, up to a subsequence, $r_n^0(y,z+k_n^1)$ converges to some $\tilde u_1\in X$ weakly in $X$, strongly in $L^2_{\mathrm{loc}}(\RN)$ and a.e. on $\RN$. The strong convergence on $B_{M_0}(0,0)$ gives
		\[
		 \int_{B_{M_0}(0,0)}|\tilde u_1|^2\,dx\geq\delta_0/2>0,
		\]
		so $\tilde u_1\not\equiv0$. The same local compactness argument will be used at every subsequent extraction.
	Since $r_n^0=u_n-\tilde u_0$, we obtain that $u_n(y,z+k_n^1)\rightharpoonup \tilde u_1(y,z)$ weakly in $X $. Now define
		\[
		r_n^1:=r_n^0-\tilde u_1(y,z-k_n^1)=u_n-\tilde u_0-\tilde u_1(y,z-k_n^1)
		\]
		and
		\[
		\delta_1:=\limsup_{n\to\infty}\sup_{\xi\in\mathbb R^N}\int_{B_1(\xi)}|r_n^1|^2\,dx.
		\]
		If $\delta_1=0$, it is similar to obtain that
		$ F_2(r_n^1)\to 0$ in $L^1(\RN)$ as above.
		Lemma \ref{F2BrezisLieb} gives
		\begin{equation*}
			\begin{aligned}
				\irn F_2(u_n)\,dx&=\irn F_2(\tilde u_0)\,dx+\irn F_2(r_n^0)\,dx+o(1)\\
				&=\irn F_2(\tilde u_0)\,dx+\irn F_2(r_n^1)\,dx+\irn F_2(\tilde u_1)\,dx+o(1)\\
				&=\irn F_2(\tilde u_1)\,dx+\irn F_2(\tilde u_0)\,dx+o(1).
			\end{aligned}
		\end{equation*}
		Likewise, one also has
		\begin{equation*}
			\begin{aligned}
				\irn F_2^\prime(u_n)u_n\,dx=\irn F_2^\prime(\tilde u_1)\tilde u_1\,dx+\irn F_2^\prime(\tilde u_0)\tilde u_0\,dx+o(1).
			\end{aligned}
		\end{equation*}
		We  write, for any domain $\Omega\subset\mathbb R^N$,
		\[
		\Psi_\Omega(u):=\int_\Omega F_1(u)\,dx.
		\]
		It follows from the fact $|k_n^1-k_n^0|=|k_n^1|\to\infty$ as $n\to\infty$, and Fatou's lemma that
		\begin{equation*}
			\begin{aligned} 
				\liminf_{n\to\infty}\Psi(u_n)
				&\ge \liminf_{n\to\infty}\sum_{i=0}^1 \Psi_{B_R(0,k_n^i)}(u_n)\\
				&= \liminf_{n\to\infty}\sum_{i=0}^1 \Psi_{B_R(0,0)}(u_n(y,z+k_n^i))
				\ge \sum_{i=0}^1 \Psi_{B_R(0,0)}(\tilde u_i)
			\end{aligned}
		\end{equation*}
		for any $R>0$. Letting $R\to\infty$, we get
		\begin{equation*}
			\liminf_{n\to\infty}\Psi(u_n)\ge \sum_{i=0}^1 \Psi(\tilde u_i).
		\end{equation*}
		The corresponding lower bounds for the quadratic terms follow from weak lower semicontinuity on the disjoint translated balls, followed by $R\to\infty$. Thus, one also has
		\begin{equation*}
			\begin{aligned}
				&\liminf_{n\to\infty}\|\n u_n\|_2^2\geq\sum_{i=0}^1\|\n \tilde u_i\|_2^2;\\
				&\liminf_{n\to\infty}\int_{\RN}W(y)u_n^2dx\geq\sum_{i=0}^1\int_{\RN}W(y)\tilde u_i^2dx;\\
				&\liminf_{n\to\infty}\int_{\RN}V(z)u_n^2dx\geq\sum_{i=0}^1\int_{\RN}V(z)\tilde u_i^2dx;\\
				&\liminf_{n\to\infty}\|u_n\|_2^2\geq\sum_{i=0}^1\|\tilde u_i\|_2^2.
			\end{aligned}
		\end{equation*}
		Thus, we  conclude it for the case $k=1$.
		
		If $\delta_1>0$,  we can find another sequence $(\xi_n^2)=(y_n^2,z_n^2)\subset\R^d\times\R^{N-d}$ satisfying $|\xi_n^2|\to+\infty$ such that
		\[\int_{B_1(\xi_n^2)}|r_n^1|^2\,dx=
		\int_{B_1(y_n^2,z_n^2)}|r_n^1|^2\,dx\geq \dfrac{\delta_1}{2},\quad {\rm for\; large\; }n.
		\]
		By a similar argument as above, we  can show that $(y_n^2)$ is bounded and  $|z_n^2|\to+\infty$ as $n\to\infty$.
		We also decompose $z_n^2$ to the following form
		\begin{equation*}
			z_n^2=k_n^2+\theta_n^2,
			\qquad
			k_n^2\in\mathbb Z^{N-d},\quad
			\theta_n^2\in[0,1)^{N-d},
		\end{equation*}
		where $|k_n^2|\to+\infty$ as $n\to\infty$.
		Hence, there exists a constant $M_1>0$ such that
		\begin{equation*}
			\int_{B_{M_1}(0,0)}|r_n^1(y,z+k_n^2)|^2\,dx\rev{\geq} \dfrac{\delta_1}{2},\quad {\rm for\; large\; }n.
		\end{equation*}
		As a result, there exists $\tilde{u}_2\in X$ such that, up to a subsequence, $r_n^1(y,z+k_n^2)\rightharpoonup \tilde u_2\not\equiv0$  in $X $ and $r_n^1(y,z+k_n^2)\to \tilde u_2(y,z)$ a.e. on $\mathbb R^N$, that is, $r_n^0(y,z+k_n^2)-\tilde u_1(y,z+k_n^2-k_n^1)\rightharpoonup \tilde u_2$  in $X $. Since $\tilde u_2\not\equiv0$ and $r_n^1(y,z+k_n^1)\rightharpoonup0$, we get that $|k_n^2-k_n^1|\to\infty$ and $u_n(y,z+k_n^2)\rightharpoonup \tilde u_2(y,z)$ in $X $ as $n\to\infty$. Define
		\[
		r_n^2:=r_n^1-\tilde u_2(y,z-k_n^2)=u_n-\tilde u_0-\tilde u_1(y,z-k_n^1)-\tilde u_2(y,z-k_n^2)
		\]
		and
		\[
		\delta_2:=\limsup_{n\to\infty}\sup_{\xi\in\mathbb R^N}\int_{B_1(\xi)}|r_n^2|^2\,dx.
		\]
		If $\delta_2=0$, we use similar argument as above to conclude it for the case $k=2$. If  $\delta_2>0$, then we  find $(\xi_n^3)=(y_n^3,z_n^3)\subset\R^d\times\R^{N-d}$  satisfying $|\xi_n^3|\to+\infty$ such that
		\[\int_{B_1(\xi_n^3)}|r_n^2|^2\,dx=
		\int_{B_1(y_n^3,z_n^3)}|r_n^2|^2\,dx\geq \dfrac{\delta_2}{2},\quad {\rm for\; large\; }n..
		\]
		Now we repeat the same procedure as above. In case that there exists some finite $k\in\mathbb N$ such that $\delta_k=0$, then the almost same discussion yields the conclusions.
		
		If the extraction does not stop, choose a common diagonal subsequence retaining, for every fixed index, the weak, local strong and a.e. convergences, the pairwise divergence of centers, and convergence of the concentration suprema to the previously selected $\delta_i$. In particular, all finite-profile identities below hold on this one subsequence. To make this choice explicit, fix a countable dense set of test functions in $X$. At the $j$th choice, require errors at most $1/j$ for the first $j$ weak tests and concentration limits, separation at least $j$ for the first $j$ distinct centers, and local $L^2(B_j)$ errors at most $2^{-j}$ for the first $j$ translated sequences used in the extraction. The squared local $L^2$ errors are summable, so the required a.e. convergences hold outside a common null set.

		We now assume that $k=+\infty$. From above, for any $i\in\mathbb N$ we have found sequences $(k_n^i)\subset\mathbb Z^{N-d}$ and $\tilde u_i\in X \setminus\{0\}$ such that $u_n(y,z+k_n^i)\rightharpoonup \tilde u_i$ in $X $  and $|k_n^i-k_n^j|\to\infty$ for $i\ne j$ as $n\to\infty$. Using the same notation above, for every finite $m\in\mathbb N$ and $R>0$, we exploit Fatou's lemma to obtain
		\begin{equation}\label{Psigeqsum}
			\begin{aligned} 
				\liminf_{n\to\infty}\Psi(u_n)
				&\ge \liminf_{n\to\infty}\sum_{i=0}^m \Psi_{B_R(0,k_n^i)}(u_n)\\
				&= \liminf_{n\to\infty}\sum_{i=0}^m \Psi_{B_R(0,0)}(u_n(y,z+k_n^i))
				\ge \sum_{i=0}^m \Psi_{B_R(0,0)}(\tilde u_i).
			\end{aligned}
		\end{equation}
		So the conclusion for $\Psi$ holds by letting $R\to\infty$ and $m\to\infty$. Likewise, we can also obtain the conclusion of $\|\n u_n\|_2^2, \int_{\RN}W(y)u_n^2dx, \int_{\RN}V(z)u_n^2dx$  and $\|u_n\|_2^2$.

		We claim that $\delta_m\to0$ as $m\to+\infty$. 
		Since $|k_n^i-k_n^j|\to+\infty$ for $i\neq j$, one has that for $i\neq j$,
		\begin{equation*}
			\left(\tilde u_i(y,z-k_n^i),\tilde u_j(y,z-k_n^j)\right)_X\to 0,\quad{\rm as~}n\to\infty.
		\end{equation*}
		Note that $r_n^m(y,z+k_n^i)\weakto 0$ for $0\leq i\leq m$, Then
		\begin{equation*}
			\left(r_n^m,\tilde u_i(y,z-k_n^i)\right)_X\to 0,\quad{\rm as~}n\to\infty.
		\end{equation*}
		Hence, we obtain 
		\begin{equation*}
			\|u_n\|_X^2=\sum_{i=0}^m\|\tilde u_i\|_X^2+\|r_n^m\|_X^2+o(1).
		\end{equation*}
		It yields that
		\[
		0<\sum_{i=0}^{m-1}\delta_i
		\leq 2\sum_{i=0}^{m-1}\int_{B_{M_i}(0,0)}|\tilde u_{i+1}|^{2}\,dx
		\le 2\sum_{i=0}^m\|\tilde u_i\|_X^2
		\le 2\limsup_{n\to\infty}\|u_n\|^2_X<+\infty,
		\]
		which implies that $\sum_{i=0}^{\infty}\delta_i<+\infty$ and $\delta_m\to0$ as $m\to\infty$.

		The same quadratic identity gives the uniform bound in the required order of limits:
		\[
		 \sup_{m\geq0}\limsup_{n\to\infty}\|r_n^m\|_X^2
		 \leq\limsup_{n\to\infty}\|u_n\|_X^2<\infty.
		\]
		We retain the factor $\|r_n^m\|_X^2$ in the estimates below until after taking $\limsup_{n\to\infty}$.
		By interpolation inequality, we have
		\begin{equation*}
		\begin{aligned}
			\int_{B_1(\xi)} |r_n^m|^{2+4/N}\,dx
			\leq
			C\left(
			\int_{B_1(\xi)}|r_n^m|^2\,dx
			\right)^{2/N}
			\int_{B_1(\xi)}
			\left(|r_n^m|^2+|\nabla r_n^m|^2\right)\,dx .
		\end{aligned}
		\end{equation*}
Now cover $\R^N$ by a countable family of unit balls with overlap bounded by a constant depending only on $N$, and sum the preceding local estimate. This gives
			\begin{equation*}
			\begin{aligned}
				\int_{\R^N} |r_n^m|^{2+4/N}\,dx
				\leq
				C\left(\sup_{\xi\in\R^N}
				\int_{B_1(\xi)}|r_n^m|^2\,dx
				\right)^{2/N}\|r_n^m\|_X^2.
			\end{aligned}
		\end{equation*}
		Taking $\limsup_{n\to\infty}$ for each fixed $m$, and using the uniform bound just proved, we obtain
				\begin{equation*}
			\begin{aligned}
				\limsup_{n\to\infty}	\int_{\R^N} |r_n^m|^{2+4/N}\,dx \leq C (\delta_m)^{2/N}.
			\end{aligned}
		\end{equation*}
		With $p_0:=2+4/N\in(2,2^*)$, \eqref{F2primegrowth} yields
		\[
		 |F_2(s)|+|F_2'(s)s|\leq C|s|^{p_0},\qquad s\in\R.
		\]
		Consequently,
		\[
		 \lim_{m\to\infty}\limsup_{n\to\infty}
		 \int_{\RN}\bigl(|F_2(r_n^m)|+|F_2'(r_n^m)r_n^m|\bigr)\,dx=0.
		\]
		For every fixed $m$, iterating Lemma \ref{F2BrezisLieb} gives
		\[
		 \int_{\RN}H_j(u_n)\,dx
		 =\sum_{i=0}^m\int_{\RN}H_j(\tilde u_i)\,dx
		  +\int_{\RN}H_j(r_n^m)\,dx+o_n(1),\qquad j=0,1,
		\]
		where $H_0(s)=F_2(s)$ and $H_1(s)=sF_2'(s)$. These functions are nonnegative, and their integrals along $(u_n)$ are bounded by the subcritical growth estimate. Hence the series of profile integrals converge. Taking upper and lower limits as $n\to\infty$ for fixed $m$, and then letting $m\to\infty$, proves both nonlinear splitting identities.
		This completes the proof.
	\end{proof}
	
	\begin{proof}[\textbf{Proof of Theorem \ref{theorem:existence}}] Let $(u_n)\subset\cN$ such that $\cI(u_n)\to c$ as $n\to\infty$.
		Then there exists $M>0$ such that
		\begin{equation*}
			\|u_n\|_2^2=2\cI(u_n)-\cJ(u_n)=2\cI(u_n)\leq 2M. 
		\end{equation*}
		According to logarithmic Sobolev inequality \eqref{logsobolev inequality}, we can take $a$ small enough such that
		\begin{equation*}
			\irn u^2\log u^2dx\leq \dfrac{1}{4}\|\n u\|_2^2+C(\log \|u\|_2^2+1)\|u\|_2^2.
		\end{equation*}
		Thus, we have
		\begin{equation*}
			\irn u_n^2\log u_n^2dx\leq \dfrac{1}{4}\|u_n\|_X^2+C,  
		\end{equation*}
		which yields that
		\begin{equation*}
			2M\geq 2\cI(u_n)=\|u_n\|_X^2-\irn u_n^2\log u_n^2dx\geq \dfrac{3}{4}\|u_n\|_X^2-C.
		\end{equation*}
		Hence, $(u_n)$ is bounded in $X$. From Lemma \ref{profiledecomposition} there exists $k\in \mathbb{N}_0\cup\{+\infty\}$, sequences $(k_n^i)_{n\in\mathbb{N}}\subset\mathbb{Z}^{N-d}$ and $\tilde u_i\in X$ for $0\le i<k+1$ such that
		\begin{equation}\label{profile}
			\begin{aligned}
				&u_n(y, z+k_n^i)\rightharpoonup \tilde u_i(y,z) \quad \text{\rm in }X ,\text{ as }n\to\infty; \\
				&\lim_{n\to\infty}|k_n^i-k_n^j|=+\infty \quad \text{for }i\ne j; \\            &\liminf_{n\to\infty}\|\n u_n\|_2^2\geq\sum_{i=0}^k\|\n \tilde  u_i\|_2^2;\\
				&\liminf_{n\to\infty}\int_{\RN}W(y)u_n^2dx\geq\sum_{i=0}^k\int_{\RN}W(y)\tilde u_i^2dx;\\
				&\liminf_{n\to\infty}\int_{\RN}V(z)u_n^2dx\geq\sum_{i=0}^k\int_{\RN}V(z)\tilde u_i^2dx;\\
				&\liminf_{n\to\infty}\|u_n\|_2^2\geq\sum_{i=0}^k\|\tilde u_i\|_2^2;\\
				&\liminf_{n\to\infty}\Psi(u_n)\ge \sum_{i=0}^k \Psi(\tilde u_i)
				; \\
				&\lim_{n\to\infty}\irn F_2(u_n)dx=\sum_{i=0}^k \irn F_2(\tilde u_i)dx;\\
				&\lim_{n\to\infty}\irn F_2^\prime(u_n)u_ndx=\sum_{i=0}^k \irn F_2^\prime(\tilde u_i)\tilde u_idx.
			\end{aligned}
		\end{equation}
		If $k=0$ and $\tilde u_0=0$, then by $    \cJ(u_n)=0$ we deduce
		\begin{equation*}
			\|u_n\|_X^2+\irn F_1^\prime(u_n)u_n\,dx=\irn F_2^\prime(u_n)u_n\,dx\to 0.
		\end{equation*}
		Since $0\leq F_1(s)\leq F_1^\prime (s)s$ for $s\in\R$, we conclude that $ u_n\to 0$ in $X$ and $F_1(u_n)\to 0 $ in $L^1(\RN)$. Then $\cI(u_n)\to\cI(0)=0$, which is a contradiction with $m>0$ from Lemma \ref{mountainpass}. Thus, either $k>0$ or $k=0$ with $\tilde u_0\neq 0$.
		
		Define $\cF:=\{i:\tilde u_i\neq 0\}\neq \emptyset$. It follows from \eqref{profile} and $\cJ(u_n)=0$ that
		\begin{equation*}
			\begin{aligned}
				&\quad\sum_{i\in\cF} \irn\left(|\n \tilde u_i|^2+W(y)|\tilde u_i|^2+V(z)|\tilde u_i|^2\right)dx +2\sum_{i\in\cF}\Psi(\tilde u_i)\\
				&	\leq\liminf_{n\to\infty}\left[\irn\left(|\n u_n|^2+W(y)|u_n|^2+V(z)| u_n|^2\right)dx+2\Psi(u_n)\right]\\
				&=2\liminf_{n\to\infty}\irn F_2(u_n)\,dx=2\sum_{i\in\cF} \irn F_2(\tilde u_i)\,dx,
			\end{aligned}
		\end{equation*}
		which yields that there exists $j\in\cF$ such that $\tilde u_j\in D(\cI)$ and  $\cJ(\tilde u_j)\leq0$. By Lemma \ref{projection}, there exists $0<t_j\leq 1$ such that $t_j\tilde u_j\in\cN$. 
		Then one has
		\begin{equation}\label{tj=1}
			\begin{aligned}
				c\leq \cI(t_j\tilde u_j)=\dfrac{t_j^2}{2}\|\tilde u_j\|_2^2\leq \dfrac{1}{2}\|\tilde u_j\|_2^2\leq \dfrac{1}{2}\liminf_{n\to\infty}\| u_n(y,z+k_n^j)\|_2^2=\liminf_{n\to\infty}\cI(u_n)=c,
			\end{aligned}
		\end{equation}
		which indicates that $t_j=1$, $\tilde u_j\in\cN$ and $\cI(\tilde u_j)=c$. By Lemma \ref{A.1}, $\tilde u_j$ is a critical point of $\cI$.
		Moreover, It follows from \eqref{profile} and \eqref{tj=1} that no other nontrivial profile occurs. For convenience, we denote $\tilde u_j$ by $u_1$ and $k_n^j$ by $k_n$.
		
		Now we know that 
		\begin{equation*}
			u_n(y,z+k_n)\to  u_1\quad {\rm in~} L^2(\R^N).
		\end{equation*}
		By interpolation, one has 
		\begin{equation*}
			u_n(y,z+k_n)\to  u_1\quad {\rm in~} L^q(\R^N)~{\rm with~}q\in[2,2^*).
		\end{equation*}
		Due to the fact that $\cJ(u_n)= 0$ and $\cJ(u_1)=0$, we obtain 
		\begin{equation*}
			\begin{aligned}
				\lim_{n\to\infty}\left(\|u_n(y,z+k_n)\|_X^2+\irn F_1^\prime(u_n)u_n\,dx\right)&=\lim_{n\to\infty}\irn F_2^\prime(u_n)u_n\,dx\\
				&=\irn F_2^\prime(u_1)u_1\,dx\\
				&=\|u_1\|_X^2+\irn F_1^\prime(u_1)u_1\,dx.
			\end{aligned}
		\end{equation*}
		By the weak lower semicontinuity of the norm, one has
		\begin{equation*}
			\|u_1\|_X^2\leq \liminf_{n\to\infty}\|u_n(y,z+k_n)\|_X^2.
		\end{equation*}
		Set $v_n(y,z)=u_n(y,z+k_n)$. After passing to a subsequence,
		$v_n\to u_1$ almost everywhere. Translation invariance of the
		integral and Fatou's lemma, applied to the nonnegative functions
		$F_1'(v_n)v_n$, give
		\begin{equation*}
			\irn F_1^\prime(u_1)u_1\,dx\leq \liminf_{n\to\infty}\irn F_1^\prime(v_n)v_n\,dx.
		\end{equation*}
		So it implies that
		\begin{equation*}
			\lim_{n\to\infty}\|u_n(y,z+k_n)\|_X^2=\|u_1\|_X^2.
		\end{equation*} 
		Therefore,
		\begin{equation*}
			u_n(y,z+k_n)\to   u_1\quad {\rm in~}X.
		\end{equation*}
			Thus every minimizing
		sequence admits, up to translations with respect to $z$, a strongly convergent subsequence whose limit belongs to $\cN$
		and achieves $c$.
		By Lemma \ref{weaksolutionequiv}, $u_1$ is a nontrivial weak solution of \eqref{problem}. Together with $\cI(u_1)=c$, $u_1$ is a ground state solution of \eqref{problem}. 
		Since, for $q>1$,
		\begin{equation*}
		|u\log u^2|\leq C(1+|u|^q),
		\end{equation*}
			we choose $q>1$ sufficiently close to $1$ and use local boundedness
			of $W+V$. Interior elliptic estimates and Sobolev embedding,
			iterated from $u_1\in H^1$, give local boundedness of $u_1$.
			The equation then yields $u_1\in W^{2,p}_{\rm loc}(\RN)$ for
			every finite $p>1$, and hence
			$u_1\in\cC^{1,\tau}_{\rm loc}(\RN)$ for every $\tau\in(0,1)$.

			Set $w:=|u_1|$. Since $|\n w|=|\n u_1|$ almost everywhere and all the remaining terms in $\cI$ and $\cJ$ are even, we have
			\begin{equation*}
			w\in\cN,\qquad \cI(w)=\cI(u_1)=c.
			\end{equation*}
			Consequently, Lemma \ref{A.1} applies to $w$ and shows that it is a critical point of $\cI$. By Lemma \ref{weaksolutionequiv}, $w$ is a nonzero, nonnegative weak solution of \eqref{problem}. The preceding local elliptic bootstrap applies to $w$ as well and gives $w\in W^{2,p}_{\mathrm{loc}}(\R^N)$ for every finite $p>1$, hence $w\in\cC^{1,\alpha}_{\mathrm{loc}}(\R^N)$ for every $\alpha\in(0,1)$.
			
			We verify the singular strong maximum principle explicitly. Given a bounded ball $\Omega\subset\R^N$, let
			\begin{equation*}
			K:=\max_{x\in\overline\Omega}\bigl(W(y)+V(z)\bigr),
			\qquad \beta(0):=0,\qquad \beta(s):=Ks-2s\log s\quad(s>0).
			\end{equation*}
			Then $\Delta w\leq\beta(w)$ in the distributional sense in $\Omega$. The function $\beta$ is continuous on $[0,\infty)$ and is increasing on a sufficiently small interval $[0,\varepsilon]$. Moreover, its primitive satisfies
			\begin{equation*}
			B(s):=\int_0^s\beta(t)\,dt
			=\frac{s^2}{2}\bigl(K+1-2\log s\bigr),
			\qquad
			\int_0^\varepsilon\frac{ds}{\sqrt{B(s)}}
			=\sqrt{2}\int_{-\log\varepsilon}^{\infty}
			\frac{dr}{\sqrt{K+1+2r}}=\infty.
			\end{equation*}
			Thus the strong maximum principle  in  Pucci, Serrin and Zou \cite[Theorem~1]{PucciSerrinZou1999}, implies that either $w\equiv0$ or $w>0$ in $\Omega$. 
			
			Choose $x_0\in\R^N$ with $w(x_0)>0$. For any $x\in\R^N$, apply the preceding alternative on a ball containing both $x_0$ and $x$. It follows that $w(x)>0$. Therefore $w$ is a positive ground state of \eqref{problem}. Furthermore, $|u_1|=w>0$ and the continuity of $u_1$ show that $u_1$ has constant sign on the connected set $\R^N$.
			Finally, let $u$ be any ground state. Since every nontrivial weak
			solution belongs to $\cN$, comparison with the ground state
			constructed above gives $\cI(u)=c$. Hence $|u|\in\cN$ also attains
			$c$. Applying Lemma \ref{A.1} and the preceding regularity and
			positivity argument to $|u|$, we obtain $|u|>0$ everywhere.
			The same local regularity applies to $u$, so its continuity and
			the connectedness of $\RN$ imply that $u$ has constant sign.
	\end{proof}

	\section{Sharp anisotropic logarithmic Sobolev inequality}\label{section sharp anisotropic inequality}
Throughout this section, we assume $V\equiv0$. We prove the sharp anisotropic logarithmic Sobolev inequality in Theorem \ref{sharp partial logarithmic inequality} by combining the Euclidean inequality with estimates associated with the quadratic potential. We determine all equality cases and use them to classify the positive ground states, completing the proof of Theorem \ref{gaussonexistence}.

In this case,
\begin{equation*}
X=\left\{u\in H^1(\RN):\irn W(y)u^2dx<\infty\right\},
\qquad
\lVert u\rVert_X^2=\irn\left(|\n u|^2+(W(y)+1)u^2\right)dx.
\end{equation*}

\begin{proof}[\textbf{Proof of Theorem \ref{sharp partial logarithmic inequality}}]
	\hspace*{1em}
	We divide the proof into three steps.
	
	\emph{Step 1. Proof of the inequality.}
	Fix $u\in X$ with $\lVert u\rVert_2=1$. 
	We use \eqref{logsobolev inequality} with the scalar parameter
	$a=\sqrt{\pi}$. For every real-valued $v\in H^1(\RN)$ with $\lVert v\rVert_2=1$, the sharp Euclidean inequality \cite{LiebLoss2001,DelPinoDolbeault2003,Carlen1991} gives
	\begin{equation}\label{normalized Euclidean logarithmic inequality}
	\irn v^2\log v^2d\zeta
	\leq\lVert\n v\rVert_2^2-N-\frac N2\log\pi,
	\qquad \lVert v\rVert_2=1.
	\end{equation}
	Indeed, $a^2/\pi=1$ and $N(1+\log a)=N+(N/2)\log\pi$.
	To obtain the required coefficients in the confined directions, define
	\begin{equation}\label{anisotropic normalized dilation}
	v(\zeta)=\left(\prod_{i=1}^d\beta_i\right)^{-1/4}
	u\left(\frac{\zeta_1}{\sqrt{\beta_1}},\ldots,
	\frac{\zeta_d}{\sqrt{\beta_d}},\zeta_{d+1},\ldots,\zeta_N\right).
	\end{equation}
	The change of variables $\zeta_i=\sqrt{\beta_i}y_i$ for $i\leq d$ and
	$(\zeta_{d+1},\ldots,\zeta_N)=z$ yields
	\begin{equation*}
	\begin{aligned}
	\lVert v\rVert_2^2&=1,\\
	\lVert\n v\rVert_2^2
	&=\sum_{i=1}^d\frac1{\beta_i}\lVert\partial_{y_i}u\rVert_2^2
	+\lVert\n_z u\rVert_2^2,\\
	\irn v^2\log v^2d\zeta
	&=\irn u^2\log u^2dx-\frac12\sum_{i=1}^d\log\beta_i.
	\end{aligned}
	\end{equation*}
	Thus \eqref{normalized Euclidean logarithmic inequality} implies
	\begin{equation}\label{alpha logarithmic inequality}
	\irn u^2\log u^2dx
	\leq\sum_{i=1}^d\frac1{\beta_i}\lVert\partial_{y_i}u\rVert_2^2
	+\lVert\n_z u\rVert_2^2-N-\frac N2\log\pi
	+\frac12\sum_{i=1}^d\log\beta_i.
	\end{equation}
	
	Since $\lambda_i=\beta_i(\beta_i-1)$ and $\beta_i>1$, integration by parts gives
	\begin{equation}\label{oscillator lower bound}
	\begin{aligned}
	\left(1-\frac1{\beta_i}\right)\lVert\partial_{y_i}u\rVert_2^2
	+\lambda_i\irn y_i^2u^2dx-(\beta_i-1)
	=
	\left(1-\frac1{\beta_i}\right)
	\irn|\partial_{y_i}u+\beta_i y_i u|^2dx\geq0.
	\end{aligned}
	\end{equation}
	Here we use $$\irn y_i u\partial_{y_i}u\,dx=-\frac{\lVert u\rVert_2^2}{2}=-\frac{1}{2}.$$
	\begin{revision}
		For general $u\in X$, this integration by parts is justified by
		a cutoff $\chi_R$ that equals $1$ on $B_R$, vanishes outside
		$B_{2R}$ and satisfies $|\nabla\chi_R|\leq C/R$.
		Integrating $\partial_{y_i}(\chi_R y_i u^2)$ gives
		\[
		2\irn\chi_R y_i u\partial_{y_i}u\,dx
		=-\irn\chi_R u^2dx-\irn y_i\partial_{y_i}\chi_R\,u^2dx.
		\]
		The last term tends to zero, while
		$y_i u\,\partial_{y_i}u\in L^1(\RN)$ by Cauchy--Schwarz.
		Letting $R\to\infty$ yields the stated identity.
	\end{revision}
	Summing \eqref{oscillator lower bound} over $i$ and substituting into
	\eqref{alpha logarithmic inequality}, we obtain
	\begin{equation*}
	\begin{aligned}
	\irn u^2\log u^2dx
	&\leq\lVert\n u\rVert_2^2
	+\irn W(y)u^2dx
	-\sum_{i=1}^d(\beta_i-1)-N-\frac N2\log\pi
	+\frac12\sum_{i=1}^d\log\beta_i\\
	&=\lVert\n u\rVert_2^2
	+\irn W(y)u^2dx-\log M_*.
	\end{aligned}
	\end{equation*}
	The last equality follows from
	\begin{equation*}
	\log M_*=T_{\boldsymbol\lambda}+\frac N2\log\pi
	-\frac12\sum_{i=1}^d\log\beta_i.
	\end{equation*}
	This proves \eqref{sharp partial log}.
	
	\emph{Step 2. Optimality of the constant.}
	For $\eta\in\R^{N-d}$, set
	\begin{equation*}
	g_\eta(y,z)=\pi^{-N/4}\left(\prod_{i=1}^d\beta_i\right)^{1/4}
	\exp\left(-\frac{\sum_{i=1}^d\beta_i y_i^2+|z-\eta|^2}{2}\right)
	=M_*^{-1/2}U_\eta(y,z).
	\end{equation*}
	Gaussian integration gives $g_\eta\in X$, $\lVert g_\eta\rVert_2=1$ and
	\begin{equation*}
	\irn y_i^2g_\eta^2dx=\frac1{2\beta_i},
	\qquad
	\irn|z-\eta|^2g_\eta^2dx=\frac{N-d}{2}.
	\end{equation*}
	Consequently,
	\begin{equation*}
	\begin{aligned}
	\lVert\n g_\eta\rVert_2^2
	&=\frac12\left(\sum_{i=1}^d\beta_i+N-d\right)
	=\frac{T_{\boldsymbol\lambda}}2,\\
	\irn W(y)g_\eta^2dx
	&=\frac12\sum_{i=1}^d\frac{\lambda_i}{\beta_i}
	=\frac{T_{\boldsymbol\lambda}-N}{2},\\
	\irn g_\eta^2\log g_\eta^2dx
	&=\frac12\sum_{i=1}^d\log\beta_i-\frac N2\log\pi-\frac N2.
	\end{aligned}
	\end{equation*}
	In particular,
	\begin{equation}\label{Gaussian attains sharp constant}
	\irn g_\eta^2\log g_\eta^2dx
	-\lVert\n g_\eta\rVert_2^2
	-\irn W(y)g_\eta^2dx
	=-\log M_*.
	\end{equation}
	Suppose that an additive constant $C$ makes
	\begin{equation*}
	\irn u^2\log u^2dx
	\leq\lVert\n u\rVert_2^2
	+\irn W(y)u^2dx+C
	\end{equation*}
	valid for all $u\in X$ with $\lVert u\rVert_2=1$. Substituting $u=g_\eta$ gives $C\geq-\log M_*$ by \eqref{Gaussian attains sharp constant}. Step 1 shows that $C=-\log M_*$ is admissible. Hence $-\log M_*$ is the smallest possible additive constant, or equivalently $\log M_*$ is the largest constant that can be subtracted from the quadratic energy.
	
	\emph{Step 3. Equality cases.}
	First, \eqref{Gaussian attains sharp constant} shows that every $u=\pm g_\eta$ gives equality in \eqref{sharp partial log}. It remains to prove that there are no other equality cases.
	
	Let $u$ give equality and let $v$ be defined by \eqref{anisotropic normalized dilation}. Both entropies are finite. Define the nonnegative Euclidean deficit
	\begin{equation*}
	\cD_0(v)=\lVert\n v\rVert_2^2-\irn v^2\log v^2d\zeta
	-N-\frac N2\log\pi\geq0.
	\end{equation*}
	The identities in Step 1 give
	\begin{equation*}
	\begin{aligned}
	&\lVert\n u\rVert_2^2
	+\irn W(y)u^2dx
	-\irn u^2\log u^2dx-\log M_*\\
	&\qquad=\cD_0(v)+
	\sum_{i=1}^d\left(1-\frac1{\beta_i}\right)
	\irn|\partial_{y_i}u+\beta_i y_i u|^2dx.
	\end{aligned}
	\end{equation*}
	Since each term on the right is nonnegative, equality in \eqref{sharp partial log} is equivalent to the simultaneous conditions
	\begin{equation}\label{simultaneous equality conditions}
	\cD_0(v)=0,\qquad
	\partial_{y_i}u+\beta_i y_i u=0\quad\hbox{a.e. in }\RN,
	\quad 1\leq i\leq d.
	\end{equation}
	By the classical equality characterization in the Euclidean sharp logarithmic Sobolev inequality \cite{LiebLoss2001,DelPinoDolbeault2003}, the first condition gives
	\begin{equation*}
	v(\zeta)=\pm\pi^{-N/4}
	\exp\left(-\frac{|\zeta-\zeta^0|^2}{2}\right),
	\qquad \zeta^0\in\RN.
	\end{equation*}
	Undoing \eqref{anisotropic normalized dilation}, and setting
	$\mu_i=\zeta_i^0/\sqrt{\beta_i}$ for $i\leq d$ and
	$\eta=(\zeta_{d+1}^0,\ldots,\zeta_N^0)$, we get
	\begin{equation*}
	u(y,z)=\pm\pi^{-N/4}
	\left(\prod_{i=1}^d\beta_i\right)^{1/4}
	\exp\left(-\frac{\sum_{i=1}^d\beta_i(y_i-\mu_i)^2+|z-\eta|^2}{2}\right).
	\end{equation*}
	For this Gaussian, $\partial_{y_i}u+\beta_i y_i u=\beta_i\mu_i u$.
	The second conditions in \eqref{simultaneous equality conditions} therefore force $\mu_i=0$ for every $i$, because $u$ is nonzero everywhere.  Thus $u=\pm g_\eta$, which is exactly \eqref{sharp equality case}.
	
	When $d=0$, the dilation is the identity and the sum of squares is absent, so all Euclidean translates remain. When $d=N$, there are no free translation parameters. Thus the endpoint cases are included, and we complete the proof.
\end{proof}

\begin{proof}[\textbf{Proof of Theorem \ref{gaussonexistence}}]
	Let $u$ be a positive ground state of \eqref{problem} with $V\equiv0$, and set $m=\lVert u\rVert_2^2$. Since $u\in\cN$, we have
	\begin{equation}\label{gausson Nehari identity}
	\lVert\n u\rVert_2^2+\irn W(y)u^2dx
	=\irn u^2\log u^2dx,
	\qquad \cI(u)=\frac m2.
	\end{equation}
	Applying Theorem \ref{sharp partial logarithmic inequality} to $v=u/\sqrt m$ and using \eqref{gausson Nehari identity}, we obtain
	\begin{equation*}
	0\leq\lVert\n v\rVert_2^2+\irn W(y)v^2dx
	-\irn v^2\log v^2dx-\log M_*
	=\log\frac{m}{M_*}.
	\end{equation*}
	Hence $m\geq M_*$.
	
	For every $\eta\in\R^{N-d}$, the identity $\beta_i^2-\beta_i=\lambda_i$ gives
	\begin{equation*}
	\begin{aligned}
	-\Delta U_\eta+W(y)U_\eta
	&=\left(T_{\boldsymbol\lambda}-\sum_{i=1}^d\beta_i y_i^2-|z-\eta|^2\right)U_\eta\\
	&=U_\eta\log U_\eta^2.
	\end{aligned}
	\end{equation*}
	Thus $U_\eta$ is a finite-energy solution with $\lVert U_\eta\rVert_2^2=M_*$. By \eqref{energy identity} and the ground state property of $u$,
	\begin{equation*}
	\frac{M_*}{2}\leq\frac m2=\cI(u)
	\leq\cI(U_\eta)=\frac{M_*}{2}.
	\end{equation*}
	Therefore $m=M_*$, and $v$ attains equality in Theorem \ref{sharp partial logarithmic inequality}. Since $u>0$, its equality characterization yields $v=M_*^{-1/2}U_\eta$ for some $\eta\in\R^{N-d}$, and hence $u=U_\eta$. Conversely, every $U_\eta$ is a positive solution with energy $M_*/2=\cI(u)$, so it is a ground state solution. This proves uniqueness up to translations in the $z$-variables. For a constant potential $V$, the substitution $u=e^{V/2}v$ gives a bijection between finite-energy solutions for $V\equiv0$ and for $V$, and multiplies their energies by $e^V$. Hence it preserves ground states and gives exactly the profiles $e^{V/2}U_\eta$. Transforming back by $x=Q\xi$ gives uniqueness up to translations in $\ker A$.
\end{proof}

\begin{proof}[\textbf{Proof of Theorem~\ref{normalized ground states}}]
For $u\in\mathcal S_\rho$ with finite energy, Corollary~\ref{homogeneous sharp inequality} gives
\begin{equation*}
\irn u^2\log u^2dx
\leq\lVert\nabla u\rVert_2^2+\irn W(y)u^2dx
+\rho\log\frac{\rho}{M_*}.
\end{equation*}
Consequently,
\begin{equation*}
\cI(u)\geq\frac{\rho}{2}\left(1+V_0-\log\frac{\rho}{M_*}\right).
\end{equation*}
The same lower bound is immediate when $\cI(u)=+\infty$.
By the equality characterization in that corollary, equality holds precisely for $u=aU_\eta$, where $a\in\R\setminus\{0\}$. Since $\lVert U_\eta\rVert_2^2=M_*$, the mass constraint is equivalent to $a^2=\rho/M_*$. These Gaussian profiles have finite energy, so they attain the lower bound and exhaust all minimizers. This proves \eqref{normalized minimum energy} and \eqref{normalized Gaussian profiles}.

Finally, $-\Delta U_\eta+WU_\eta=U_\eta\log U_\eta^2$. For $u=aU_\eta$ with $a^2=\rho/M_*$, direct substitution therefore yields
\begin{equation*}
-\Delta u+(W+V_0)u
=u\log u^2+\left(V_0-\log\frac{\rho}{M_*}\right)u,
\end{equation*}
which is \eqref{normalized logarithmic equation}. Thus the equation follows directly from the explicit minimizers, without an application of a Lagrange multiplier theorem to the extended-valued energy.
\end{proof}

\section{Final remarks}\label{section final remarks}

\noindent\textbf{Cubic interactions and optical lattices.}
The same trapping geometry also occurs in the cubic models of \cite{OstrovskayaKivshar2004,Salasnich2009} and, more generally, in models with Sobolev subcritical power-type nonlinearities. To illustrate the role of the nonlinearity, consider the focusing stationary equation
\begin{equation}\label{cubic outlook equation}
-\Delta u+(W(y)+V(z))u-\mu u=|u|^2u
\end{equation}
in dimensions $N=2,3$, with the same assumptions on $W$ and $V$. Write $H=-\Delta+W+V$ and assume $\mu<\inf\sigma(H)$, where $H$ is the self-adjoint operator associated with the quadratic form on $X$. Then
\[
q_\mu(u)=\irn\bigl(|\nabla u|^2+(W+V-\mu)|u|^2\bigr)dx
\]
is coercive on $X$, and the action $S_\mu(u)=\frac12q_\mu(u)-\frac14\lVert u\rVert_4^4$ is of class $\cC^1$. The unique Nehari projection is explicit:
\[
t_u^2=\frac{q_\mu(u)}{\lVert u\rVert_4^4},\qquad
\inf_{\substack{u\ne0\\q_\mu(u)=\lVert u\rVert_4^4}}S_\mu(u)
=\frac14\left(\inf_{u\ne0}\frac{q_\mu(u)}{\lVert u\rVert_4^2}\right)^2.
\]
A direct proof of attainment can be based on this quotient. Let \((u_n)\) be an \(L^4\)-normalized minimizing sequence. Using profile decomposition modulo integer translations, together with quadratic decoupling and the Br\'ezis--Lieb lemma, one obtains profiles \(w_j\).
The remainder vanishes in $L^4$: for $N=2$ this follows directly
from the quantitative estimate with exponent $2+4/N=4$, and for
$N=3$ it follows by interpolation between vanishing in $L^{10/3}$
and boundedness in $L^6$.
Setting
\[
a_j:=\|w_j\|_4^4,
\]
the vanishing of the remainder in \(L^4\) yields
\[
\sum_j a_j=1.
\]
On the other hand, minimality and quadratic decoupling imply
\[
1\geq \sum_j \sqrt{a_j}.
\]
If two or more profiles were nontrivial, then
\[
\sum_j\sqrt{a_j}>\sqrt{\sum_j a_j}=1,
\]
which is a contradiction. Hence exactly one nontrivial profile remains, and this profile attains the quotient. In the fully confined case, attainment follows directly from the compact embedding of the energy space. In that setting, one may also allow bounded periodic perturbations depending on all spatial variables. Finally, since the corresponding action functional is of class \(C^1\), the standard smooth Nehari-manifold argument shows that the minimizer is a critical point.
This outlines the corresponding argument for the cubic problem and, by comparison, highlights the additional difficulty in the logarithmic case of showing that a constrained minimizer is a critical point of the corresponding energy functional. Related power-type problems under partial confinement are studied in \cite{BellazziniBoussaidJeanjeanVisciglia2017,ShanShuaiWu2025}.

\medskip
\noindent\textbf{Beyond the logarithmic nonlinearity.}
Theorem~\ref{abstract minimizer lemma} isolates a mechanism that can be checked independently of the compactness theory of a particular equation. For another model, one first seeks a convex lower semicontinuous plus smooth decomposition and a transformation $S_t$ for which the full energy has the form $\alpha(t)\mathcal A(u)-\mathcal Q(t,u)$. One must then verify the unique fibering maximum and its nondegeneracy, the local Lipschitz and scalar regularity assumptions, and continuity of the projection parameter along convex segments. Once a minimizer is obtained by coercivity and a suitable concentration-compactness technique, the theorem supplies the missing criticality conclusion. In this sense the present logarithmic problem and the Born--Infeld problems of \cite{BieganowskiIkomaMederski2026,MederskiZeng2026} exhibit a common strategy for energies with different sources of nonsmoothness. 

	\hspace*{\fill} \\
\noindent{\bf Acknowledgments.}
 This work was completed during the first author's visit to Institute of Mathematics of the Polish Academy of Sciences, and he appreciates the institution for its excellent academic environment and warm hospitality.
 B. Li was supported by China Scholarship Council (Grant No. 202606170036). J. Mederski was partly supported by the National Science Centre, Poland (Grant No. 2025/57/B/ST1/04804)  and X. Zeng was partly supported by the National Science Centre, Poland (Grant No. 2023/51/B/ST1/00968).

\hspace*{\fill} \\
\noindent{\bf Data availability.}	The manuscript has no associated data.

	\hspace*{\fill} \\
\noindent{\bf Conflict of interest.} The authors declare no conflict of interest.

	\hspace*{\fill} \\
\noindent{\bf AI assistance statement.}
 The authors used AI tools to assist with manuscript editing and checking mathematical arguments developed by the authors. All proofs were independently verified by the authors, who take full responsibility for the mathematical correctness and final content of the manuscript.

\bibliographystyle{plain}
\begingroup
\let\l\polishl
\color{black}
\hypersetup{hidelinks}
\bibliography{log-partial-revised-introduction-v13}
\endgroup

\end{document}